   \numberwithin{equation}{section}
\newtheorem{thm}{Theorem}[section]
\newtheorem{lem}[thm]{Lemma}
\newtheorem{prop}[thm]{Proposition}
\newtheorem{defn}[thm]{Definition}
\journal{}
\begin{document}
\begin{frontmatter}
\author[rvt1]{Jian Wang}
\ead{wangj484@nenu.edu.cn}
\author[rvt2]{Yong Wang\corref{cor2}}
\ead{wangy581@nenu.edu.cn}
\cortext[cor2]{Corresponding author.}

\address[rvt1]{School of Science, Tianjin University of Technology and Education, Tianjin, 300222, P.R.China}
\address[rvt2]{School of Mathematics and Statistics, Northeast Normal University,
Changchun, 130024, P.R.China}

\title{On K-K-W Type Theorems for Conformal Perturbations \\ of Twisted Dirac Operators}
\begin{abstract}
In this paper, we  prove two Kastler-Kalau-Walze type theorems for conformal perturbations of  twisted
 Dirac operators and conformal perturbations of twisted  signature operators on four-dimensional manifolds with (resp. without) boundary.
\end{abstract}
\begin{keyword}
Conformal perturbations of twisted Dirac operators; noncommutative residue;  non-unitary connection.
\end{keyword}
\end{frontmatter}
\section{Introduction}
\label{1}

The noncommutative residue found in \cite{Gu,Wo} plays a prominent role in noncommutative geometry.
 In \cite{Co1}, Connes used the noncommutative residue to derive a conformal four-
dimensional Polyakov action analogy. In \cite{Co2}, Connes proved that the noncommutative residue on a compact manifold
$M$ coincided with Dixmier's trace on pseudodifferential operators of order -dim$M$.
 Several years ago, Connes made a challenging observation that the noncommutative
residue of the square of the inverse of the Dirac operator was proportional to the Einstein-Hilbert action, which is
called the Kastler-Kalau-Walze theorem now. Kastler\cite{Ka} gave a brute-force proof of this theorem. Kalau and Walze\cite{KW} proved
this theorem in the normal coordinates system simultaneously. Ackermann\cite{Ac} gave a note on a new proof of this theorem
by means of the heat kernel expansion. Moreover, Fedosov etc.\cite{FGLS} constructed a noncommutative residue on the algebra of
classical elements in Boutet de Monvel's calculus on a compact manifold with boundary of dimension $n>2$. For Dirac operators
 and signature operators on manifolds with boundary, Wang\cite{Wa3} gave an operator-theoretic explanation of the gravitational
action for manifolds with boundary and proved a Kastler-Kalau-Walze type theorem.

In \cite{BZ}, Bismut and Zhang  introduced the de-Rham Hodge operator twisted by a flat vector bundle with a non-metric connection,
 and extended the famous Cheeger-M$\ddot{u}$ller theorem to the non-unitary case. In \cite{ZW}, Zhang considered the sub-signature
 operators twisted by a non-unitary flat vector  bundle and proved the associated Riemann-Roch theorem.
 In \cite{WW1}, we proved the Lichnerowicz formula for Dirac operators and signature operators twisted by a vector bundle with
 a non-unitary connection and got two Kastler-Kalau-Walze type theorems for twisted Dirac operators and twisted signature operators
 on four-dimensional manifolds with boundary. It is important that Wang proved a Kastler-Kalau-Walze type theorem for perturbations
 of Dirac operators on compact manifolds with or without boundary in \cite{Wa5}. The motivation of this paper is to establish two
  Kastler-Kalau-Walze type theorems for conformal perturbations of twisted Dirac operators and perturbations of twisted signature operators.

This paper is organized as follows. In Section 2, we prove a Kastler-Kalau-Walze type theorem for
conformal perturbations of twisted Dirac operators on 4-dimensional compact manifolds with or without boundary. In Section 3, we prove
a Kastler-Kalau-Walze type theorem for conformal perturbations of twisted signature operators on 4-dimensional compact
manifolds with or without boundary.

\section{A Kastler-Kalau-Walze Type Theorem for
 Conformal Perturbations of  twisted Dirac Operators}

 \subsection{Boutet de Monvel's calculus and noncommutative residue}
In this section, we recall some basic facts and formulae about Boutet de Monvel's calculus as follows.

Let $$ F:L^2({\bf R}_t)\rightarrow L^2({\bf R}_v);~F(u)(v)=\int e^{-ivt}u(t)\texttt{d}t$$ denote the Fourier transformation and
$\varphi(\overline{{\bf R}^+}) =r^+\varphi({\bf R})$ (similarly define $\varphi(\overline{{\bf R}^-}$)), where $\varphi({\bf R})$
denotes the Schwartz space and
  \begin{equation}
r^{+}:C^\infty ({\bf R})\rightarrow C^\infty (\overline{{\bf R}^+});~ f\rightarrow f|\overline{{\bf R}^+};~
 \overline{{\bf R}^+}=\{x\geq0;x\in {\bf R}\}.
\end{equation}
We define $H^+=F(\varphi(\overline{{\bf R}^+}));~ H^-_0=F(\varphi(\overline{{\bf R}^-}))$ which are orthogonal to each other. We have the following
 property: $h\in H^+~(H^-_0)$ iff $h\in C^\infty({\bf R})$ which has an analytic extension to the lower (upper) complex
half-plane $\{{\rm Im}\xi<0\}~(\{{\rm Im}\xi>0\})$ such that for all nonnegative integer $l$,
 \begin{equation}
\frac{\texttt{d}^{l}h}{\texttt{d}\xi^l}(\xi)\sim\sum^{\infty}_{k=1}\frac{\texttt{d}^l}{\texttt{d}\xi^l}(\frac{c_k}{\xi^k})
\end{equation}
as $|\xi|\rightarrow +\infty,{\rm Im}\xi\leq0~({\rm Im}\xi\geq0)$.

 Let $H'$ be the space of all polynomials and $H^-=H^-_0\bigoplus H';~H=H^+\bigoplus H^-.$ Denote by $\pi^+~(\pi^-)$ respectively the
 projection on $H^+~(H^-)$. For calculations, we take $H=\widetilde H=\{$rational functions having no poles on the real axis$\}$ ($\tilde{H}$
 is a dense set in the topology of $H$). Then on $\tilde{H}$,
 \begin{equation}
\pi^+h(\xi_0)=\frac{1}{2\pi i}\lim_{u\rightarrow 0^{-}}\int_{\Gamma^+}\frac{h(\xi)}{\xi_0+iu-\xi}\texttt{d}\xi,
\end{equation}
where $\Gamma^+$ is a Jordan close curve included ${\rm Im}\xi>0$ surrounding all the singularities of $h$ in the upper half-plane and
$\xi_0\in {\bf R}$. Similarly, define $\pi^{'}$ on $\tilde{H}$,
 \begin{equation}
\pi'h=\frac{1}{2\pi}\int_{\Gamma^+}h(\xi)\texttt{d}\xi.
\end{equation}
So, $\pi'(H^-)=0$. For $h\in H\bigcap L^1(R)$, $\pi'h=\frac{1}{2\pi}\int_{R}h(v)\texttt{d}v$ and for $h\in H^+\bigcap L^1(R)$, $\pi'h=0$.
Denote by $\mathcal{B}$ Boutet de Monvel's algebra (for details, see Section 2 of \cite{Wa1}).

An operator of order $m\in {\bf Z}$ and type $d$ is a matrix
$$A=\left(\begin{array}{lcr}
  \pi^+P+G  & K  \\
   T  &  S
\end{array}\right):
\begin{array}{cc}
\   C^{\infty}(X,E_1)\\
 \   \bigoplus\\
 \   C^{\infty}(\partial{X},F_1)
\end{array}
\longrightarrow
\begin{array}{cc}
\   C^{\infty}(X,E_2)\\
\   \bigoplus\\
 \   C^{\infty}(\partial{X},F_2)
\end{array}.
$$
where $X$ is a manifold with boundary $\partial X$ and
$E_1,E_2~(F_1,F_2)$ are vector bundles over $X~(\partial X
)$.~Here,~$P:C^{\infty}_0(\Omega,\overline {E_1})\rightarrow
C^{\infty}(\Omega,\overline {E_2})$ is a classical
pseudodifferential operator of order $m$ on $\Omega$, where
$\Omega$ is an open neighborhood of $X$ and
$\overline{E_i}|X=E_i~(i=1,2)$. $P$ has an extension:
$~{\cal{E'}}(\Omega,\overline {E_1})\rightarrow
{\cal{D'}}(\Omega,\overline {E_2})$, where
${\cal{E'}}(\Omega,\overline {E_1})~({\cal{D'}}(\Omega,\overline
{E_2}))$ is the dual space of $C^{\infty}(\Omega,\overline
{E_1})~(C^{\infty}_0(\Omega,\overline {E_2}))$. Let
$e^+:C^{\infty}(X,{E_1})\rightarrow{\cal{E'}}(\Omega,\overline
{E_1})$ denote extension by zero from $X$ to $\Omega$ and
$r^+:{\cal{D'}}(\Omega,\overline{E_2})\rightarrow
{\cal{D'}}(\Omega, {E_2})$ denote the restriction from $\Omega$ to
$X$, then define
$$\pi^+P=r^+Pe^+:C^{\infty}(X,{E_1})\rightarrow {\cal{D'}}(\Omega,
{E_2}).$$
In addition, $P$ is supposed to have the
transmission property; this means that, for all $j,k,\alpha$, the
homogeneous component $p_j$ of order $j$ in the asymptotic
expansion of the
symbol $p$ of $P$ in local coordinates near the boundary satisfies:
$$\partial^k_{x_n}\partial^\alpha_{\xi'}p_j(x',0,0,+1)=
(-1)^{j-|\alpha|}\partial^k_{x_n}\partial^\alpha_{\xi'}p_j(x',0,0,-1),$$
then $\pi^+P:C^{\infty}(X,{E_1})\rightarrow C^{\infty}(X,{E_2})$
by Section 2.1 of \cite{Wa1}.

In the following, write $\pi^+D^{-1}=\left(\begin{array}{lcr}
  \pi^+D^{-1}  & 0  \\
   0  &  0
\end{array}\right)$,
 we will compute  $\widetilde{Wres}[\pi^{+}(\widetilde{D}_{F}^{*})^{-1} \circ\pi^{+}\widetilde{D}_{F}^{-1}]$.
Let $M$ be a compact manifold with boundary $\partial M$. We assume that the metric $g^{M}$ on $M$ has
the following form near the boundary
 \begin{equation}
 g^{M}=\frac{1}{h(x_{n})}g^{\partial M}+\texttt{d}x _{n}^{2} ,
\end{equation}
where $g^{\partial M}$ is the metric on $\partial M$. Let $U\subset
M$ be a collar neighborhood of $\partial M$ which is diffeomorphic $\partial M\times [0,1)$. By the definition of $h(x_n)\in C^{\infty}([0,1))$
and $h(x_n)>0$, there exists $\tilde{h}\in C^{\infty}((-\varepsilon,1))$ such that $\tilde{h}|_{[0,1)}=h$ and $\tilde{h}>0$ for some
sufficiently small $\varepsilon>0$. Then there exists a metric $\hat{g}$ on $\hat{M}=M\bigcup_{\partial M}\partial M\times
(-\varepsilon,0]$ which has the form on $U\bigcup_{\partial M}\partial M\times (-\varepsilon,0 ]$
 \begin{equation}
\hat{g}=\frac{1}{\tilde{h}(x_{n})}g^{\partial M}+\texttt{d}x _{n}^{2} ,
\end{equation}
such that $\hat{g}|_{M}=g$.
We fix a metric $\hat{g}$ on the $\hat{M}$ such that $\hat{g}|_{M}=g$.
Note $\widetilde{D}_{F}$ is the  twisted Dirac operator on the spinor bundle $S(TM)\otimes F$ corresponding to the
connection $\widetilde{\nabla}$.

Now we recall the main theorem in \cite{FGLS}.

\begin{thm}\label{th:32}{\bf(Fedosov-Golse-Leichtnam-Schrohe)}
 Let $X$ and $\partial X$ be connected, ${\rm dim}X=n\geq3$,
 $A=\left(\begin{array}{lcr}\pi^+P+G &   K \\
T &  S    \end{array}\right)$ $\in \mathcal{B}$ , and denote by $p$, $b$ and $s$ the local symbols of $P,G$ and $S$ respectively.
 Define:
 \begin{eqnarray}
{\rm{\widetilde{Wres}}}(A)&=&\int_X\int_{\bf S}{\rm{tr}}_E\left[p_{-n}(x,\xi)\right]\sigma(\xi)dx \nonumber\\
&&+2\pi\int_ {\partial X}\int_{\bf S'}\left\{{\rm tr}_E\left[({\rm{tr}}b_{-n})(x',\xi')\right]+{\rm{tr}}
_F\left[s_{1-n}(x',\xi')\right]\right\}\sigma(\xi')dx',
\end{eqnarray}
Then~~ a) ${\rm \widetilde{Wres}}([A,B])=0 $, for any
$A,B\in\mathcal{B}$;~~ b) It is a unique continuous trace on
$\mathcal{B}/\mathcal{B}^{-\infty}$.
\end{thm}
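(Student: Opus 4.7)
The plan is to establish both the trace property and the uniqueness by reducing them to analogues of Wodzicki's classical theorem on closed manifolds, together with a careful analysis of how boundary terms interact with commutators in Boutet de Monvel's algebra. First I would localize the problem: by a partition of unity, split any element of $\mathcal{B}$ into an interior piece supported away from $\partial X$ and a boundary piece supported in a collar $\partial X\times [0,1)$. The interior pieces are classical pseudodifferential operators on $X^\circ$, and on those the formula of $\widetilde{Wres}$ reduces to $\int_X\int_{\mathbf S}\mathrm{tr}_E[p_{-n}]\sigma(\xi)dx$, which is exactly Wodzicki's residue and is known to be a trace on the classical symbol algebra.

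For part (a), having dealt with the interior, I would show that the boundary block contributes zero to $\widetilde{Wres}([A,B])$. This amounts to proving that for operators supported near $\partial X$, the homogeneous degree-$(-n)$ part of the interior symbol of $[A,B]$ and the degree-$(1-n)$ part of its boundary symbol split into two kinds of terms: Poisson-bracket contributions $\{a_\alpha,b_\beta\}$, which become total derivatives on the cosphere bundle and integrate to zero by Stokes, and algebraic commutators on the boundary fiber, whose $\mathrm{tr}_E$ or $\mathrm{tr}_F$ vanishes by cyclicity. The key symbolic identity is the Leibniz-type composition law in Boutet de Monvel's calculus, which involves the $\pi^+,\pi^-$ operators defined via (2.3)–(2.4) and which produces extra singular-Green and trace/potential contributions whose residues cancel pairwise between the $b_{-n}$ and $s_{1-n}$ terms in the formula.

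For part (b), I would adapt Wodzicki's uniqueness argument: given a continuous trace $\tau$ on $\mathcal{B}/\mathcal{B}^{-\infty}$, I would show that every element whose triple $(p_{-n},b_{-n},s_{1-n})$ vanishes is a finite sum of commutators modulo smoothing, so that $\tau$ factors through the linear functional $A\mapsto(p_{-n},b_{-n},s_{1-n})$. Invariance of $\tau$ under local diffeomorphisms then forces it to be an integral against a density built from these three data, and plugging in explicit model operators (for instance a radial interior symbol, a model singular Green operator at the boundary, and a model pseudodifferential operator on $\partial X$) pins down the three scalar multiples and matches them with those appearing in (2.7). The main obstacle, shared by both (a) and (b), is organizing the symbol calculus on the boundary: each composition in $\mathcal{B}$ generates correction terms from the $\pi^\pm$ projections, and isolating precisely which of these feed into the $b_{-n}$ and $s_{1-n}$ components while verifying that all remaining pieces cancel requires a meticulous bookkeeping of the $H^+/H^-$ decomposition, and constitutes the principal technical content of the Fedosov-Golse-Leichtnam-Schrohe proof.
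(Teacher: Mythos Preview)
The paper does not prove this theorem at all: it is stated as a quoted result, introduced with ``Now we recall the main theorem in \cite{FGLS},'' and no argument is given beyond the citation. Consequently there is no proof in the paper for your proposal to be compared against. Your outline is broadly in the spirit of the original Fedosov--Golse--Leichtnam--Schrohe argument (localization, reduction of the interior contribution to Wodzicki's residue, symbol-level cancellation for commutators on the boundary, and a commutator-quotient argument for uniqueness), but within the present paper the theorem functions purely as an imported black box, so any attempt to supply a proof here goes beyond what the paper itself does.
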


 \subsection{A Kastler-Kalau-Walze Type Theorem for Conformal
Perturbations of  twisted Dirac Operators}

 In this section, we shall prove a Kastler-Kalau-Walze type formula for conformal
perturbations of  twisted Dirac Operators on four-dimensional compact manifolds
with boundary. Let $S(TM)$ be the spinors bundle and $F$ be an additional smooth vector bundle  equipped with a non-unitary
connection $\widetilde{\nabla}^{F}$.
Let $\widetilde{\nabla}^{F,\ast}$ be the dual connection  on $F$, and define
 \begin{equation}
\nabla^{F}=\frac{\widetilde{\nabla}^{F}+\widetilde{\nabla}^{F,\ast}}{2},~~\Phi=\frac{\widetilde{\nabla}^{F}-\widetilde{\nabla}^{F,\ast}}{2},
\end{equation}
then $\nabla^{F}$ is a metric connection and $\Phi$ is an endomorphism of $F$ with a 1-form coefficient.
 We consider the tensor product vector bundle $S(TM)\otimes F$,
which becomes a Clifford module via the definition:
\begin{equation}
c(a)=c(a)\otimes \texttt{id}_{F},~~~~~a\in TM,
\end{equation}
and which we equip with the compound connection:
 \begin{equation}
\widetilde{\nabla}^{ S(TM)\otimes F}= \nabla^{ S(TM)}\otimes \texttt{id}_{ F}+ \texttt{id}_{ S(TM)}\otimes \widetilde{\nabla}^{F}.
\end{equation}
The corresponding twisted Dirac operator $\widetilde{D}_{F}$ is locally specified as follows:
 \begin{equation}
\widetilde{D}_{F}=\sum_{i=1}^{n}c(e_{i})\widetilde{\nabla}^{ S(TM)\otimes F}_{e_{i}}.
\end{equation}
Let
 \begin{equation}
\nabla^{ S(TM)\otimes F}=\nabla^{ S(TM)}\otimes \texttt{id}_{ F}+ \texttt{id}_{ S(TM)}\otimes \nabla^{F},
\end{equation}
then the spinor connection  $\widetilde{\nabla}$  induced by $\nabla^{ S(TM)\otimes F}$ is locally
given by
 \begin{equation}
\widetilde{\nabla}^{ S(TM)\otimes F}=\nabla^{ S(TM)}\otimes \texttt{id}_{ F}+ \texttt{id}_{ S(TM)}\otimes \nabla^{F}+\texttt{id}_{ S(TM)}\otimes \Phi.
\end{equation}

\begin{defn}
Let  $\{e_{i}\}(1\leq i,j\leq n)$ $(\{\partial_{i}\})$ be the orthonormal
frames (natural frames respectively ) on  $TM$, then the form of Dirac operators as following
 \begin{equation}
D_{F}=\sum_{i,j}g^{ij}c(\partial_{i})\nabla^{ S(TM)\otimes F}_{\partial_{j}}=\sum_{j}^{n}c(e_{j})\nabla^{ S(TM)\otimes F}_{e_{j}},
\end{equation}
where $\nabla^{S(TM)\otimes F}_{\partial_{j}}=\partial_{j}+\sigma_{j}^{s}+\sigma_{j}^{F}$ and
$\sigma_{j}^{s}=\frac{1}{4}\sum_{j,k}\langle \nabla^{L}_{\partial_{i}}e_{j}, e_{k}\rangle c(e_{j})c(e_{k})$,
$\sigma_{j}^{F}$ is the connection  matrix of $\nabla^{F}$.
\end{defn}

 \begin{defn}
For sections $\psi\otimes \chi\in S(TM)\otimes F$, then the  twisted Dirac operators $\widetilde{D}_{F}$, $\widetilde{D}^{*}_{F}$ associated to the  connection
$\widetilde{\nabla}$ as follows
\begin{eqnarray}
&&\widetilde{D}_{F}(\psi\otimes \chi)=D_{F}(\psi\otimes \chi)+\sum_{i=1}^{n}c(e_{i})\otimes \Phi(e_{i})(\psi\otimes \chi),\\
&&\widetilde{D}^{*}_{F}(\psi\otimes \chi)=D_{F}(\psi\otimes \chi)-\sum_{i=1}^{n}c(e_{i})\otimes \Phi^{*}(e_{i})(\psi\otimes \chi).
\end{eqnarray}
Here $\Phi^{*}(e_{i})$ denotes the adjoint of $\Phi(e_{i})$.
\end{defn}

In the following, we will compute the more general case
$\widetilde{Wres}[\pi^{+}\big(f\widetilde{D}_{F}^{-1}\big) \circ\pi^{+}\big(f^{-1}(\widetilde{D}_{F}^{*})^{-1}\big)]$ for nonzero smooth functions $f, f^{-1}$.
 Denote by $\sigma_{l}(A)$ the $l$-order symbol of an operator A. An application of (3.5) and (3.6) in \cite{Wa1} shows that
\begin{equation}
\widetilde{Wres}[\pi^{+}\big(f\widetilde{D}_{F}^{-1}\big) \circ\pi^{+}\big(f^{-1}(\widetilde{D}_{F}^{*})^{-1}\big)]
=Wres[f\widetilde{D}_{F}^{-1} \circ f^{-1}(\widetilde{D}_{F}^{*})^{-1}]+\int_{\partial M}\Psi,
\end{equation}
where
 \begin{eqnarray}
\Psi&=&\int_{|\xi'|=1}\int_{-\infty}^{+\infty}\sum_{j,k=0}^{\infty}\sum \frac{(-i)^{|\alpha|+j+k+\ell}}{\alpha!(j+k+1)!}
{\bf{trace}}_{S(TM)\otimes F}\Big[\partial_{x_{n}}^{j}\partial_{\xi'}^{\alpha}\partial_{\xi_{n}}^{k}\sigma_{r}^{+}
(f\widetilde{D}_{F}^{-1})(x',0,\xi',\xi_{n})\nonumber\\
&&\times\partial_{x_{n}}^{\alpha}\partial_{\xi_{n}}^{j+1}\partial_{x_{n}}^{k}\sigma_{l}(f^{-1}(\widetilde{D}_{F}^{*})^{-1})(x',0,\xi',\xi_{n})\Big]
\texttt{d}\xi_{n}\sigma(\xi')\texttt{d}x' ,
\end{eqnarray}
and the sum is taken over $r-k+|\alpha|+\ell-j-1=-n,r\leq-1,\ell\leq-1$.

Note that
 \begin{eqnarray}
Wres\big[f\widetilde{D}_{F}^{-1} f^{-1}(\widetilde{D}_{F}^{*})^{-1}\big]
&=&Wres\big[(\widetilde{D}_{F}^{*}f \widetilde{D}_{F}f^{-1})^{-1} \big] \nonumber\\
&=&Wres\big[\big(\widetilde{D}_{F}^{*}\widetilde{D}_{F}-\widetilde{D}_{F}^{*}c(df)f^{-1}\big)^{-1 }\big].
\end{eqnarray}
In order to calculate the symbol of operators
 $\widetilde{D}_{F}^{*}\widetilde{D}_{F}-\widetilde{D}_{F}^{*}c(df)f^{-1}$,
we recall the basic notions of Laplace type operators in Section 1 of \cite{PBG}.
Let $V$ be a vector bundle on $M$. Any differential operator $P$ of Laplace type has locally the form
 \begin{equation}
P=-\big(g^{ij}\partial_{i}\partial_{j}+A^{i}\partial_{i}+B\big),
\end{equation}
where $\partial_{i}$  is a natural local frame on $TM$ ,
 and $(g^{ij})_{1\leq i,j\leq n}$ is the inverse matrix associated with the metric
matrix  $(g_{ij})_{1\leq i,j\leq n}$ on $M$,
 and $A^{i}$ and $B$ are smooth sections
of $\texttt{End}(V)$ on $M$ (Endomorphism). If $P$ is a Laplace type
operator of the form (2.20), then there is a unique
connection $\nabla$ on $V$ and a unique Endomorphism $E$ such that
 \begin{equation}
P=-\Big[g^{ij}(\nabla_{\partial_{i}}\nabla_{\partial_{j}}-
 \nabla_{\nabla^{L}_{\partial_{i}}\partial_{j}})+E\Big],
\end{equation}
where $\nabla^{L}$ denotes the Levi-Civita connection on $M$. Moreover
(with local frames of $T^{*}M$ and $V$), $\nabla_{\partial_{i}}=\partial_{i}+\omega_{i} $
and $E$ are related to $g^{ij}$, $A^{i}$, and $B$ through
 \begin{eqnarray}
&&\omega_{i}=\frac{1}{2}g_{ij}\big(A^{i}+g^{kl}\Gamma_{ kl}^{j} \texttt{Id}\big),\\
&&E=B-g^{ij}\big(\partial_{i}(\omega_{i})+\omega_{i}\omega_{j}-\omega_{k}\Gamma_{ ij}^{k} \big),
\end{eqnarray}
where $\Gamma_{ kl}^{j}$ is the  Christoffel coefficient of $\nabla^{L}$.

The next task then is to prove $\widetilde{D}^{*}_{F}\widetilde{D}_{F}$ has the Laplace type form.
Let $\partial^{j}=g^{ij}\partial_{i}, \sigma^{i}=g^{ij}\sigma_{j}, \Gamma^{k}=g^{ij}\Gamma_{ij}^{k}$.
From (6a) in \cite{Ka}, we have
 \begin{equation}
\widetilde{D}^{*}_{F}\widetilde{D}_{F}=D_{F}^{2}-c(\Phi^{*})D_{F}+D_{F}c(\Phi)-c(\Phi^{*})c(\Phi),
\end{equation}
and
\begin{eqnarray}
-c(\Phi^{*})D_{F}+D_{F}c(\Phi)&=&-\sum_{j}c(\Phi^{*})c(e_{j})\Big[e_{j}+ \sigma^{S(TM)\otimes F}_{j}\Big]
+\sum_{j}c(e_{j})\otimes c(\Phi)e_{j}\nonumber\\
 &&+\sum_{j}c(e_{j})\otimes e_{j}\big(c(\Phi)\big)
+\sum_{j}\Big[c(e_{j})\sigma_{j}^{S(TM)}\otimes c(\Phi)+c(e_{j})\otimes \sigma_{j}^{ F}c(\Phi) \Big].
\end{eqnarray}
Combining (2.24)-(2.25), we obtain the specification of $\widetilde{D}_{F}^{*}\widetilde{D}_{F}-\widetilde{D}_{F}^{*}c(df)f^{-1}$.
\begin{eqnarray}
\widetilde{D}_{F}^{*}\widetilde{D}_{F}-\widetilde{D}_{F}^{*}c(df)f^{-1}
&=&-g^{ij}\partial_{i}\partial_{j}-2\sigma^{j}_{S(TM)\otimes F}\partial_{j}-g^{ij}c(\partial_{i})c(df)f^{-1}\partial_{j}\nonumber\\
&&+\Gamma^{k}\partial_{k}-\sum_{j}\Big[c(\Phi^{*})c(e_{j})-c(e_{j})\otimes c(\Phi) \Big]e_{j}\nonumber\\
                &&-g^{ij}\Big[\partial_{i}(\sigma^{j}_{S(TM)\otimes F}) +\sigma^{i}_{S(TM)\otimes F}\sigma^{j}_{S(TM)\otimes F}
                  -\Gamma_{ij}^{k}\sigma_{S(TM)\otimes F}^{k}\Big]\nonumber\\
                &&-\sum_{j}\Big[c(\Phi^{*})c(e_{j}) \Big]\sigma^{S(TM)\otimes F}_{j}
               +\sum_{j}c(e_{j})\otimes e_{j}\big(c(\Phi)\big)\nonumber\\
                &&+\sum_{j}\Big[c(e_{j})\sigma_{j}^{S(TM)}\otimes c(\Phi)+c(e_{j})\otimes \sigma_{j}^{ F}c(\Phi) \Big]-c(\Phi^{*})c(\Phi)\nonumber\\
                               &&+\frac{1}{4}s+\frac{1}{2}\sum_{i\neq j} R^{F}(e_{i},e_{j})c(e_{i})c(e_{j})
                      -g^{ij}c(\partial_{i})\frac{\partial [c(df)f^{-1}]}{\partial x_{j}}\nonumber\\
                      &&-\sum_{j}g^{ij}\sigma^{j}_{S(TM)\otimes F} c(df)f^{-1}-c(\Phi^{*})c(df)f^{-1}.
\end{eqnarray}

In terms of local coordinates $\{\partial_{i}\}$ inducing the  coordinate transformation
 $e_{j}=\sum_{k=1}^{n}\langle e_{j}, \texttt{d}x^{k}\rangle \partial_{k}$, let $\Gamma^{k}=g^{ij}\Gamma_{ij}^{k} $,  then
 \begin{equation}
\omega_{j}=\sigma^{j}_{S(TM)}+\sigma^{j}_{F}+g^{ij}c(\partial_{i})c(df)f^{-1}+\frac{1}{2}\Big[\sum_{j,k=1}^{n}\langle e_{k}, \texttt{d}x^{j}\rangle c(\Phi^{*})c(e_{k})
                 -\sum_{j,k=1}^{n}\langle e_{k}, \texttt{d}x^{j}\rangle c(e_{k})c(\Phi)+\Gamma^{i}\Big].
\end{equation}
For a smooth vector field $X\in \Gamma(M,TM)$, let $c(X)$ denote the Clifford action. By direct computation in normal coordinates, we obtain
 \begin{equation}
\widetilde{\nabla}_{X}=\nabla^{ S(TM)\otimes F}_{X}+\frac{1}{2}[c(\Phi^{*})c(X)-c(X)c(\Phi)]+c(X)c(df)f^{-1}.
\end{equation}

 We now compute $E$.
Regrouping the terms and inserting (2.26), (2.27) into (2.23), we obtain
\begin{eqnarray}
E&=&g^{ij}\Big[\partial_{i}(\sigma^{j}_{S(TM)\otimes F}) +\sigma^{i}_{S(TM)\otimes F}\sigma^{j}_{S(TM)\otimes F}
                  -\Gamma_{ij}^{k}\sigma_{S(TM)\otimes F}^{k}\Big]
                  +\sum_{j}\Big[c(\Phi^{*})c(e_{j}) \Big]\sigma^{S(TM)\otimes F}_{j}\nonumber\\
                &&-\sum_{j}c(e_{j})\otimes e_{j}\big(c(\Phi)\big)
                -\sum_{j}\Big[c(e_{j})\sigma_{j}^{S(TM)}\otimes c(\Phi)+c(e_{j})\otimes \sigma_{j}^{ F}c(\Phi) \Big]\nonumber\\
                &&+c(\Phi^{*})c(\Phi)-\frac{1}{4}s-\frac{1}{2}\sum_{i\neq j} R^{F}(e_{i},e_{j})c(e_{i})c(e_{j})\nonumber\\
                 &&-\partial^{j}\big(\sigma^{j}_{S(TM)\otimes F}\big)-\frac{1}{2}\partial^{j}\Big(\sum_{k=1}^{n}\langle e_{k}, \texttt{d}x^{j}\rangle
                  c(\Phi^{*})c(e_{k})-\sum_{k=1}^{n}\langle e_{k}, \texttt{d}x^{j}\rangle c(e_{k})c(\Phi)\Big)\nonumber\\
                  &&-g^{ij}\sigma^{i}_{S(TM)\otimes F}\sigma^{j}_{S(TM)\otimes F}
                  -\frac{1}{2}g^{ij}\sigma^{i}_{S(TM)\otimes F}\Big(\sum_{k=1}^{n}\langle e_{k}, \texttt{d}x^{j}\rangle
                  c(\Phi^{*})c(e_{k})-\sum_{k=1}^{n}\langle e_{k}, \texttt{d}x^{j}\rangle c(e_{k})c(\Phi)\Big)\nonumber\\
                   &&-\frac{1}{2}g^{ij}\Big(\sum_{k=1}^{n}\langle e_{k}, \texttt{d}x^{i}\rangle
                  c(\Phi^{*})c(e_{k})-\sum_{k=1}^{n}\langle e_{k}, \texttt{d}x^{i}\rangle c(e_{k})c(\Phi)\Big)\sigma^{j}_{S(TM)\otimes F}\nonumber\\
                    &&-\frac{1}{4}g^{ij}\Big(\sum_{k=1}^{n}\langle e_{k}, \texttt{d}x^{i}\rangle
     c(\Phi^{*})c(e_{k})-\sum_{k=1}^{n}\langle e_{k}, \texttt{d}x^{i}\rangle c(e_{k})c(\Phi)\Big)\nonumber\\
  &&\times\Big(\sum_{k=1}^{n}\langle e_{k}, \texttt{d}x^{j}\rangle
                  c(\Phi^{*})c(e_{k})-\sum_{k=1}^{n}\langle e_{k}, \texttt{d}x^{j}\rangle c(e_{k})c(\Phi)\Big)\nonumber\\
                  &&+\Big[\sigma^{k}_{S(TM)\otimes F}+\frac{1}{2}\Big(\sum_{l=1}^{n}\langle e_{l}, \texttt{d}x^{k}\rangle c(\Phi^{*})c(e_{l})
                 -\sum_{l=1}^{n}\langle e_{l}, \texttt{d}x^{k}\rangle c(e_{l})c(\Phi)\Big)\Big]\Gamma^{k}\nonumber\\
          &&+g^{ij}c(\partial_{i})\sigma^{j}_{S(TM)\otimes F}c(df)f^{-1}+g^{ij}c(\partial_{i})\frac{\partial [c(df)f^{-1}]}{\partial x_{j}}
          -c(\Phi^{*})c(df)f^{-1} \nonumber\\
          &&- g^{ij}\partial_{j}(\frac{1}{2}c(\partial_{i})c(df)f^{-1})-\frac{1}{4}g^{ij}c(\partial_{i})c(df)f^{-1}c(\partial_{i})c(df)f^{-1}
          +\frac{1}{2}g^{ij}c(\partial_{k})c(df)f^{-1}\Gamma_{ij}^{k}\nonumber\\
          &&-g^{ij}\sigma^{j}_{S(TM)\otimes F}c(\partial_{i}) c(df)f^{-1}
          -\Big[c(\Phi^{*})c(e_{i})-c(e_{i})c(\Phi)\Big]c(\partial_{i}) c(df)f^{-1}.
\end{eqnarray}

Since $E$ is globally defined on $M$, so we can perform
computations of $E$ in normal coordinates. In terms of normal coordinates about $x_{0}$ one has:
$\sigma^{j}_{S(TM)}(x_{0})=0$ $e_{j}\big(c(e_{i})\big)(x_{0})=0$, $\Gamma^{k}(x_{0})=0$, we conclude that
\begin{eqnarray}
E(x_{0})&=&-\frac{1}{4}s-\frac{1}{2}\sum_{i\neq j} R^{F}(e_{i},e_{j})c(e_{i})c(e_{j})
         -\frac{1}{4}\sum_{i}\Big[c(\Phi^{*})c(e_{i})-c(e_{i})c(\Phi) \Big]^{2}+c(\Phi^{*})c(\Phi) \nonumber\\
        && -\frac{1}{2}\sum_{j}\Big(\nabla_{e_{j}}^{F}c(\Phi^{*})\Big)c(e_{j})-\frac{1}{2}\sum_{j}c(e_{j})\nabla_{e_{j}}^{F}c(\Phi)\nonumber\\
        &&+c(\partial_{i})\frac{\partial (c(df)f^{-1})}{\partial x_{i}}
          -c(\Phi^{*})c(df)f^{-1} \nonumber\\
          &&- \partial_{i}\big(\frac{1}{2}c(\partial_{i})c(df)f^{-1}\big)-\frac{1}{4}c(\partial_{i})c(df)f^{-1}c(\partial_{i})c(df)f^{-1}
          \nonumber\\
          && -\Big[c(\Phi^{*})c(e_{i})-c(e_{i})c(\Phi)\Big]c(\partial_{i}) c(df)f^{-1}.
\end{eqnarray}

From  Theorem 1 in \cite{Ka} and Theorem 1 in \cite{KW},
for $M$ a compact $n$ dimensional ($n\geq 4$, even) Riemannian manifold and $\widetilde{D}^{*}_{F}\widetilde{D}_{F}$
a generalized Laplacian acting on sections of  vector bundle  on $M$, the
following relation holds:
\begin{equation}
Wres\big[\big(\widetilde{D}_{F}^{*}\widetilde{D}_{F}-\widetilde{D}_{F}^{*}c(df)f^{-1}\big)^{-1} \big] ^{(\frac{n-2}{2})}
=\frac{(2\pi)^{\frac{n}{2}}}{(\frac{n}{2}-2)!}\int_{M}{\bf{Tr}}(\frac{s}{6}+E)\texttt{d}vol_{M},
\end{equation}
where Wres denotes the noncommutative residue.

\begin{lem}The following identity holds
\begin{eqnarray*}
&&{\bf{Tr}}[c(\Phi^{*})c(df)]=-{\bf{Tr}}_{F}[\Phi^{*}(grad_{M}f)]{\bf{Tr}}[id];\nonumber\\
&&{\bf{Tr}}[c(\partial_{i})\frac{\partial (c(df)f^{-1})}{\partial x_{i}}](x_{0})=[-f^{-1} \Delta(f)-\langle grad_{M}f, grad_{M}f^{-1}\rangle](x_{0}){\bf{Tr}}[id];\nonumber\\
&&{\bf{Tr}}[\partial_{i}\big(c(\partial_{i})c(df)f^{-1}\big)](x_{0})=[- f^{-1} \Delta(f)(x_{0})
-\langle grad_{M}f, grad_{M}f^{-1}\rangle](x_{0}){\bf{Tr}}[id];\nonumber\\
&&{\bf{Tr}}[c(\partial_{i})c(df)f^{-1}c(\partial_{i})c(df)f^{-1}](x_{0})=f^{-2}\big[ |grad_{M}(f)|^{2}+2\Delta(f)\big](x_{0}){\bf{Tr}}[id];\nonumber\\
&&{\bf{Tr}}[c(\Phi^{*})c(e_{i})c(\partial_{i})c(df)f^{-1}](x_{0})=-f^{-1} {\bf{Tr}}_{F}[\Phi^{*}(grad_{M}f)](x_{0}){\bf{Tr}}[id];\nonumber\\
&&{\bf{Tr}}[c(e_{i})c(\Phi)c(\partial_{i}) c(df)f^{-1}](x_{0})=f^{-1} {\bf{Tr}}_{F}[\Phi (grad_{M}f)](x_{0}){\bf{Tr}}[id].
\end{eqnarray*}
\end{lem}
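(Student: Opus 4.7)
The plan is to treat all six identities as consequences of three ingredients: (i) working in Riemannian normal coordinates centered at $x_0$, so that $g_{ij}(x_0)=\delta_{ij}$, $\partial_k g_{ij}(x_0)=0$, $\Gamma^k_{ij}(x_0)=0$, and the frame transition $\langle e_j,\texttt{d}x^k\rangle$ is the identity at $x_0$ with vanishing first derivatives; (ii) the Clifford relation $c(X)c(Y)+c(Y)c(X)=-2g(X,Y)$, which at $x_0$ with $X=\partial_i$, $Y=\partial_j$ gives $c(\partial_i)c(\partial_j)+c(\partial_j)c(\partial_i)=-2\delta_{ij}$; and (iii) the factorization of the trace on $S(TM)\otimes F$, namely ${\bf Tr}_{S(TM)\otimes F}[A\otimes B]={\bf Tr}_{S(TM)}[A]\,{\bf Tr}_F[B]$, together with the basic spinor trace ${\bf Tr}_{S(TM)}[c(X)c(Y)]=-g(X,Y){\bf Tr}[\texttt{id}]$.

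For the purely algebraic identities (the first, fourth, fifth, and sixth), the plan is to expand $c(df)=\sum_k(\partial_k f)\,c(\partial_k)$ at $x_0$ and use (ii)–(iii). The first identity then follows from
\[
{\bf Tr}[c(\Phi^{*})c(df)]=\sum_{j,k}(\partial_k f)\,{\bf Tr}_{S(TM)}[c(e_j)c(\partial_k)]\,{\bf Tr}_F[\Phi^{*}(e_j)],
\]
and the two orthonormality facts at $x_0$. For the fourth, I would move the scalar $f^{-1}$'s past the Clifford factors, reduce $c(\partial_i)c(df)c(\partial_i)$ via the anticommutation relation, and then take the trace; the fifth and sixth are essentially the same calculation applied to $c(\Phi^{*})c(e_i)c(\partial_i)c(df)$ and $c(e_i)c(\Phi)c(\partial_i)c(df)$, where repeated application of $c(X)c(Y)+c(Y)c(X)=-2g(X,Y)$ collapses the four Clifford factors and leaves a pairing of $df$ with $\Phi^{*}$ or $\Phi$.

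For the derivative identities (the second and third), the strategy is to apply the Leibniz rule and then compute each resulting term at $x_0$. For the second, I would write
\[
c(\partial_i)\partial_{x_i}(c(df)f^{-1})=c(\partial_i)\,\partial_{x_i}c(df)\,f^{-1}+c(\partial_i)c(df)\,\partial_{x_i}f^{-1},
\]
use the normal-coordinate fact $\partial_{x_i}c(\texttt{d}x^k)(x_0)=0$ so that $\partial_{x_i}c(df)(x_0)=\sum_k(\partial_i\partial_k f)(x_0)\,c(\partial_k)$, and trace against $c(\partial_i)$ to get $-\Delta(f)(x_0)$; the second piece produces $\sum_i(\partial_i f^{-1})(-\partial_i f)=-\langle\text{grad}_M f,\text{grad}_M f^{-1}\rangle$. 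The third identity has the additional term $(\partial_{x_i}c(\partial_i))c(df)f^{-1}$, which vanishes at $x_0$ because $c(\partial_i)$ differs from $c(e_i)$ by coefficients involving $g_{ij}$ whose first derivatives vanish at $x_0$; the remaining two contributions coincide with those in the second identity.

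The main obstacle is bookkeeping rather than genuine difficulty: one must consistently separate the Clifford factors that act only on $S(TM)$ from the endomorphism part $\Phi^{*}(e_k)$ acting on $F$, track which terms survive in normal coordinates and which vanish to first order, and distribute the scalar $f^{-1}$ and its derivatives carefully through the noncommutative products. Once the normal-coordinate simplifications $\Gamma^k_{ij}(x_0)=0$, $\sigma^{S(TM)}_j(x_0)=0$, and $\partial_{x_i}\langle e_j,\texttt{d}x^k\rangle(x_0)=0$ are invoked, each identity reduces to a short Clifford computation of the form sketched above.
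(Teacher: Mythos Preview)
Your proposal is correct and follows essentially the same approach as the paper's own proof: the paper likewise invokes the Clifford trace identities, the tensor factorization ${\bf Tr}_{S(TM)\otimes F}={\bf Tr}_{S(TM)}\cdot{\bf Tr}_F$, and the normal-coordinate simplifications $\sigma_j^{S(TM)}(x_0)=0$, $\Gamma^k(x_0)=0$, $\partial_i\big(g_{TM}(\partial_i,\tilde e_k)\big)(x_0)=0$, explicitly carrying out only the first and third identities and declaring the rest ``similar.'' Your outline in fact supplies more detail than the paper does for the remaining four identities, but the method is the same.
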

\begin{proof}
By the relation of the Clifford action and $\texttt{tr}AB=\texttt{tr}BA$,
\begin{eqnarray*}
&&{\bf{Tr}}[c(\xi')c(\texttt{d}x_{n})]=0; \ {\bf{Tr}}[c(\texttt{d}x_{n})^{2}]=-4;\ {\bf{Tr}}[c(\xi')^{2}](x_{0})|_{|\xi'|=1}=-4;\nonumber\\
&&{\bf{Tr}}[\partial_{x_{n}}[c(\xi')]c(\texttt{d}x_{n})]=0; \ {\bf{Tr}}[\partial_{x_{n}}c(\xi')\times c(\xi')](x_{0})|_{|\xi'|=1}=-2h'(0).
\end{eqnarray*}
Let $c(\partial_{i})= \sum_{j=1}^{4} \langle \partial_{i}, \widetilde{e}_{k}\rangle c(\widetilde{e}_{k})$ and
$c(\Phi^{*})= \sum_{j=1}^{4}c(e_{j})\otimes\Phi^{*}(e_{j})$, then
\begin{eqnarray*}
{\bf{Tr}}[c(\Phi^{*})c(df)](x_{0})&=&{\bf{Tr}}[\sum_{j=1}^{4}c(e_{j})\otimes\Phi^{*}(e_{j})c(df)](x_{0})
 =\sum_{j=1}^{4}{\bf{Tr}}[c(e_{j})c(df)]{\bf{Tr}}_{F}[\Phi^{*}(e_{j})](x_{0})\nonumber\\
&=&-\sum_{j=1}^{4}g(e_{j}, grad_{M}f){\bf{Tr}}[id]{\bf{Tr}}_{F}[\Phi^{*}(e_{j})](x_{0})
 =-\sum_{j=1}^{4}e_{j}(f){\bf{Tr}}[id]{\bf{Tr}}_{F}[\Phi^{*}(e_{j})](x_{0})\nonumber\\
&=&-{\bf{Tr}}[id]{\bf{Tr}}_{F}[\Phi^{*}(\sum_{j=1}^{4}e_{j}(f)e_{j})](x_{0})=-{\bf{Tr}}_{F}[\Phi^{*}(grad_{M}f)](x_{0}){\bf{Tr}}[id]\nonumber\\
&=&-{\bf{Tr}}_{F}[\Phi^{*}(grad_{M}f)](x_{0}){\bf{Tr}}[id],
\end{eqnarray*}
and
\begin{eqnarray*}
&&{\bf{Tr}}[\partial_{i}\big(c(\partial_{i})c(df)f^{-1}\big)](x_{0})\nonumber\\
&=&{\bf{Tr}}[\partial_{i}\big(c(\partial_{i})\big)c(df)f^{-1}](x_{0})+{\bf{Tr}}[c(\partial_{i})\partial_{i}\big(c(df)\big)f^{-1}](x_{0})
    +{\bf{Tr}}[c(\partial_{i})c(df)\partial_{i}\big(f^{-1}\big)](x_{0})\nonumber\\
&=&{\bf{Tr}}[\partial_{i}\big(\sum_{k=1}^{4} \langle \partial_{i}, \widetilde{e}_{k}\rangle c(\widetilde{e}_{k})\big)c(df)f^{-1}](x_{0})
            +f^{-1}{\bf{Tr}}[c(\partial_{i})\partial_{i}\big(c(df)\big)](x_{0})\nonumber\\
      &&-g_{TM}(\partial_{i},grad_{M}f) \partial_{i}(f^{-1})(x_{0}){\bf{Tr}}[id]
    \nonumber\\
&=&\sum_{k=1}^{4}{\bf{Tr}}[\partial_{i}\big(g_{TM}(\partial_{i}, \widetilde{e}_{k})\big) c(\widetilde{e}_{k})c(df)f^{-1}](x_{0})
         +f^{-1}{\bf{Tr}}[c(\partial_{i})\sum_{j=1}^{4}\frac{\partial\big(\tilde{e}_{j}(f)\big)}{\partial x_{i}} c(\tilde{e}_{j}) \big)](x_{0})\nonumber\\
          && -\partial_{i}(f)\partial_{j}(f^{-1})(x_{0}){\bf{Tr}}[id]
      \nonumber\\
&=&\sum_{k=1}^{4}\partial_{i}\big(g_{TM}(\partial_{i}, \widetilde{e}_{k})\big) f^{-1}{\bf{Tr}}[ c(\widetilde{e}_{k})c(grad_{M}f)](x_{0})
             +f^{-1}\sum_{j=1}^{4}\frac{\partial\big(\tilde{e}_{j}(f)}{\partial x_{i}} {\bf{Tr}}[c(\partial_{i})c(\tilde{e}_{j})\big)](x_{0})\nonumber\\
            && -\langle grad_{M}f, grad_{M}f^{-1}\rangle (x_{0}){\bf{Tr}}[id]\nonumber\\
&=&-\sum_{k=1}^{4}\partial_{i}\big(g_{TM}(\partial_{i}, \widetilde{e}_{k})\big) f^{-1}\widetilde{e}_{k}(f)(x_{0}){\bf{Tr}}[id]
    -f^{-1}\sum_{j=1}^{4}\frac{\partial\big(\tilde{e}_{j}(f)\big)}{\partial x_{i}}(x_{0}) {\bf{Tr}}[id]\nonumber\\
  &&-\langle grad_{M}f, grad_{M}f^{-1}\rangle (x_{0}){\bf{Tr}}[id]\nonumber\\
  &=&[- f^{-1}\sum_{i,k}\partial_{i}(g_{TM}(\partial_{i},\tilde{e}_{k}))\tilde{e}_{k}(f)(x_{0})- f^{-1} \Delta(f)(x_{0})
-\langle grad_{M}f, grad_{M}f^{-1}\rangle](x_{0}){\bf{Tr}}[id]\nonumber\\
  &=&[- f^{-1} \Delta(f)(x_{0})
-\langle grad_{M}f, grad_{M}f^{-1}\rangle](x_{0}){\bf{Tr}}[id].
\end{eqnarray*}
And similarly we have proved this lemma. For more trace expansions, we can see \cite{Co1,Co2,Wa3}.
\end{proof}
From (2.30), Lemma 2.4 and $Tr(e_{i}e_{j}) =0~(i\neq j)$, we find for the trace
\begin{eqnarray}
{\bf{Tr}}(E(x_{0}))&=&{\bf{Tr}}\Big[-\frac{1}{4}s
         +c(\Phi^{*})c(\Phi)-\frac{1}{4}\sum_{i}\big[c(\Phi^{*})c(e_{i})-c(e_{i})c(\Phi) \big]^{2}\nonumber\\
         &&~~-\frac{1}{2}\sum_{j}\nabla_{e_{j}}^{F}\big(c(\Phi^{*})\big)c(e_{j})-\frac{1}{2}\sum_{j}c(e_{j})\nabla_{e_{j}}^{F}
            \big(c(\Phi)\big)\Big]\nonumber\\
         &&~~-2f^{-1}\Delta(f)+4f^{-1}{\bf{Tr}}_{F}[\Phi(grad_{M}f)]-f^{-2}\big[ |grad_{M}(f)|^{2}+2\Delta(f)\big],
\end{eqnarray}
where  $\triangle$ denotes the Laplacian operator.

Substituting (2.32) into (2.31), we obtain
\begin{thm}
For even $n$-dimensional compact spin manifolds
without boundary, the following equality
holds:
\begin{eqnarray}
&&Wres\big[f\widetilde{D}_{F}^{-1} f^{-1}(\widetilde{D}_{F}^{*})^{-1}\big]^{\frac{n-2}{2}}\nonumber\\
&=&\frac{(2\pi)^{\frac{n}{2}}}{(\frac{n}{2}-2)!}\int_{M}
\Big\{{\bf{Tr}} \Big[-\frac{s}{12}+c(\Phi^{*})c(\Phi)-\frac{1}{4}\sum_{i}\big[c(\Phi^{*})c(e_{i})-c(e_{i})c(\Phi) \big]^{2}\nonumber\\
         &&~~~~-\frac{1}{2}\sum_{j}\nabla_{e_{j}}^{F}\big(c(\Phi^{*})\big)c(e_{j})-\frac{1}{2}\sum_{j}c(e_{j})\nabla_{e_{j}}^{F}
            \big(c(\Phi)\big)\Big]\nonumber\\
 &&~~-2f^{-1}\Delta(f)+4f^{-1}{\bf{Tr}}_{F}[\Phi(grad_{M}f)]-f^{-2}\big[ |grad_{M}(f)|^{2}+2\Delta(f)\big]\Big\}\texttt{d}vol_{M},
\end{eqnarray}
where $s$ is the scaler curvature.
\end{thm}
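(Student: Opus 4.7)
The plan is to reduce the computation to an application of the Kastler--Kalau--Walze formula (2.31) for Laplace-type operators, applied to the operator
\[
P := \widetilde{D}_{F}^{*}\widetilde{D}_{F}-\widetilde{D}_{F}^{*}c(df)f^{-1}.
\]
The starting point is the identity (2.19), which rewrites the composition $f\widetilde{D}_{F}^{-1}\circ f^{-1}(\widetilde{D}_{F}^{*})^{-1}$ as $P^{-1}$ inside the noncommutative residue. Since the noncommutative residue is a trace that depends only on the (full) symbol class of the inverse, this reduction is clean: once one checks that $P$ is an invertible elliptic operator of Laplace type, no further symbolic gymnastics on $\widetilde{D}_{F}^{-1}$ or $(\widetilde{D}_{F}^{*})^{-1}$ are needed.

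First I would verify the Laplace-type form of $P$ using the expansion in (2.26). The leading symbol is clearly $-g^{ij}\partial_{i}\partial_{j}\otimes\mathrm{id}$, so $P$ has the structure $-(g^{ij}\partial_{i}\partial_{j}+A^{i}\partial_{i}+B)$ with the $A^{i}$ and $B$ read off from (2.26). Then I would invoke (2.22) and (2.23) to extract the connection coefficient $\omega_{j}$ and the bundle endomorphism $E$; the expressions (2.27) and (2.29) are precisely these, organized so that all terms involving the auxiliary one-form $\Phi$ and the perturbation $c(df)f^{-1}$ are visible. At this point the Kastler--Kalau--Walze formula (2.31) applies and yields
\[
Wres[P^{-1}]^{(\frac{n-2}{2})}=\frac{(2\pi)^{\frac{n}{2}}}{(\frac{n}{2}-2)!}\int_{M}\mathbf{Tr}\Bigl(\tfrac{s}{6}+E\Bigr)\,\texttt{d}vol_{M}.
\]

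The main work is then to compute $\mathbf{Tr}(E)$ pointwise. Since $E$ is globally defined, I would evaluate at a point $x_{0}$ in normal coordinates, where $\sigma^{j}_{S(TM)}(x_{0})=0$, $\Gamma^{k}(x_{0})=0$ and $e_{j}(c(e_{i}))(x_{0})=0$. This collapses (2.29) to the much shorter expression (2.30). Taking the fiber trace requires two ingredients: the standard Clifford-trace identities (the ones listed at the start of the proof of Lemma~2.4, which kill off-diagonal terms such as $\mathbf{Tr}(c(e_{i})c(e_{j}))=0$ for $i\neq j$, so that the $R^{F}$-curvature term and the purely spinorial parts of the $\nabla^{F}_{e_{j}}c(\Phi)$ terms contribute trivially in trace), and the six trace computations packaged in Lemma~2.4, which convert the $c(df)f^{-1}$ insertions into geometric quantities $f^{-1}\Delta(f)$, $\langle\mathrm{grad}_{M}f,\mathrm{grad}_{M}f^{-1}\rangle$, $|\mathrm{grad}_{M}(f)|^{2}$ and $f^{-1}\mathbf{Tr}_{F}[\Phi(\mathrm{grad}_{M}f)]$. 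Adding these contributions, being careful about the sign and the factor $-\tfrac{1}{2}$ on $\partial_{i}(c(\partial_{i})c(df)f^{-1})$, produces equation (2.32). Combining the $\tfrac{1}{6}s$ piece of (2.31) with the $-\tfrac{1}{4}s$ piece inside $\mathbf{Tr}(E)$ yields the coefficient $-\tfrac{s}{12}$ visible in the final formula.

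The hard part, and the only step that is not essentially bookkeeping, is the organization of the cancellations in (2.29). Many cross terms of the form $\sigma^{i}_{S(TM)\otimes F}c(\Phi^{*})c(e_{k})$, $c(\partial_{i})c(df)f^{-1}\cdot\Gamma^{k}$, and mixed products $c(\Phi^{*})c(e_{i})c(\partial_{i})c(df)f^{-1}$ are present before one passes to normal coordinates; I would have to keep careful track of signs coming from the $\tfrac{1}{2}[c(\Phi^{*})c(X)-c(X)c(\Phi)]+c(X)c(df)f^{-1}$ perturbation of the connection (equation (2.28)) to make sure the cancellation pattern reproduces exactly (2.30). Once that simplification is in hand, Lemma~2.4 does the rest and the theorem follows by direct substitution.
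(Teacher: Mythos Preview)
Your proposal is correct and follows essentially the same route as the paper: rewrite the residue via (2.19) as $Wres[P^{-1}]^{(\frac{n-2}{2})}$ for the Laplace-type operator $P=\widetilde{D}_{F}^{*}\widetilde{D}_{F}-\widetilde{D}_{F}^{*}c(df)f^{-1}$, extract $\omega_{j}$ and $E$ as in (2.27)--(2.30), apply the Kastler--Kalau--Walze formula (2.31), and then use Lemma~2.4 together with $\mathrm{Tr}(c(e_{i})c(e_{j}))=0$ for $i\neq j$ to produce (2.32). The paper carries out exactly these steps in exactly this order, so there is nothing to add.
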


Locally we can use Theorem 2.5 to compute the interior term of (2.17), then  for conformal
perturbations of  twisted Dirac Operators on four-dimensional compact manifolds
with boundary,
 \begin{eqnarray}
&&\int_M\int_{|\xi|=1}{\rm trace}_{S(TM)\otimes F}[\sigma_{-4}((\widetilde{D}_{F}^{*}\widetilde{D}_{F}-\widetilde{D}_{F}^{*}c(df)f^{-1})^{-1}]\sigma(\xi)dx\nonumber\\
 &=& 4 \pi^{2}\int_{M}\Big\{{\bf{Tr}}
 \Big[-\frac{s}{12}+c(\Phi^{*})c(\Phi)
-\frac{1}{4}\sum_{i}\big[c(\Phi^{*})c(e_{i})-c(e_{i})c(\Phi) \big]^{2}
     \nonumber\\
     &&~~~~~~~~~~~~~~-\frac{1}{2}\sum_{j}\nabla_{e_{j}}^{F}\big(c(\Phi^{*})\big)c(e_{j})-\frac{1}{2}\sum_{j}c(e_{j})\nabla_{e_{j}}^{F}
            \big(c(\Phi)\big)\Big]
            \nonumber\\
 &&~~-2f^{-1}\Delta(f)+4f^{-1}{\bf{Tr}}_{F}[\Phi(grad_{M}f)]-f^{-2}\big[ |grad_{M}(f)|^{2}+2\Delta(f)\big]\Big\}\texttt{d}vol_{M}.
          \end{eqnarray}
So we only need to compute $\int_{\partial M}\Psi$.
Let us now turn to compute the symbol expansion of $\widetilde{D}_{F}^{-1}$. Recall the definition of the twisted  Dirac operator
 $\widetilde{D}_{F}$ in Definition 2.3.
Let $\nabla^{TM}$
denote the Levi-civita connection about $g^M$. In the local coordinates $\{x_i; 1\leq i\leq n\}$ and the fixed orthonormal frame
$\{\widetilde{e_1},\cdots,\widetilde{e_n}\}$, the connection matrix $(\omega_{s,t})$ is defined by
\begin{equation}
\nabla^{TM}(\widetilde{e_1},\cdots,\widetilde{e_n})= (\widetilde{e_1},\cdots,\widetilde{e_n})(\omega_{s,t}).
\end{equation}
Let $c(\widetilde{e_i})$ denote the Clifford action. Let $g^{ij}=g(dx_i,dx_j)$ and
\begin{equation}
\nabla^{TM}_{\partial_i}\partial_j=\sum_k\Gamma_{ij}^k\partial_k; ~\Gamma^k=g^{ij}\Gamma_{ij}^k.
\end{equation}
Let the cotangent vector $\xi=\sum \xi_jdx_j$ and $\xi^j=g^{ij}\xi_i$.
 By Lemma 1 in \cite{Wa1} and Lemma 2.1 in \cite{Wa3}, for any fixed point $x_0\in\partial M$, we can choose the normal coordinates $U$
 of $x_0$ in $\partial M$ (not in $M$). By the composition formula and (2.2.11) in \cite{Wa3}, we obtain
\begin{lem}
 Let $\widetilde{D}^{*}_{F},  \widetilde{D}_{F}$ be the twisted  Dirac operators on  $\Gamma(S(TM)\otimes F)$,
then
\begin{eqnarray}
&&\sigma_{-1}(( \widetilde{D}^{*}_{F} )^{-1})=\sigma_{-1}(\widetilde{D}_{F}^{-1})=\frac{\sqrt{-1}c(\xi)}{|\xi|^2}; \\
&&\sigma_{-2}((\widetilde{D}^{*}_{F})^{-1})=\frac{c(\xi)\sigma_0(\widetilde{D}^{*}_{F})c(\xi)}{|\xi|^4}+\frac{c(\xi)}{|\xi|^6}\sum_jc(dx_j)
\Big[\partial_{x_j}[c(\xi)]|\xi|^2-c(\xi)\partial_{x_j}(|\xi|^2)\Big] ;\\
&& \sigma_{-2}(\widetilde{D}_{F}^{-1})=\frac{c(\xi)\sigma_0(\widetilde{D}_{F})c(\xi)}{|\xi|^4}+\frac{c(\xi)}{|\xi|^6}\sum_jc(dx_j)
\Big[\partial_{x_j}[c(\xi)]|\xi|^2-c(\xi)\partial_{x_j}(|\xi|^2)\Big],
\end{eqnarray}
where
\begin{eqnarray}
\sigma_0(\widetilde{D}^{*}_{F})&=& \sigma_{0}(D)+\sum_{j=1}^{n}c(e_{j})\big(\sigma_{j}^{F}-\Phi^{*}(e_{j})\big);\\
\sigma_0(\widetilde{D}_{F})&=& \sigma_{0}(D)+\sum_{j=1}^{n}c(e_{j})\big(\sigma_{j}^{F}+\Phi(e_{j})\big).
\end{eqnarray}
\end{lem}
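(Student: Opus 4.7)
The plan is a direct symbol calculation in two stages: first read off the principal and subprincipal symbols of $\widetilde{D}_F$ and $\widetilde{D}_F^*$ from Definitions 2.2--2.3, then invert them term-by-term using the standard pseudodifferential composition formula $\sigma(PQ)\sim\sum_\alpha\frac{(-i)^{|\alpha|}}{\alpha!}\,\partial_\xi^\alpha\sigma(P)\,\partial_x^\alpha\sigma(Q)$ applied to the identity $P\circ P^{-1}=\mathrm{Id}$. The cited normal coordinate choice at $x_0\in\partial M$ (Lemma~1 in \cite{Wa1} and Lemma~2.1 in \cite{Wa3}) is used to fix conventions, but the formulas themselves hold in any coordinates.

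For the operator symbols (2.40)--(2.41), I would observe that Definition 2.3 expresses $\widetilde{D}_F=D_F+\sum_i c(e_i)\otimes \Phi(e_i)$ and $\widetilde{D}_F^*=D_F-\sum_i c(e_i)\otimes\Phi^*(e_i)$ as zero-order bundle-endomorphism perturbations of $D_F$. Hence both operators share the principal symbol $\sqrt{-1}\,c(\xi)$ with $D_F$. At order zero, writing $\nabla^{S(TM)\otimes F}_{\partial_j}=\partial_j+\sigma_j^s+\sigma_j^F$ as in Definition 2.2 gives $\sigma_0(D_F)=\sigma_0(D)+\sum_j c(e_j)\sigma_j^F$, and adding the zero-order endomorphisms $\pm\sum_j c(e_j)\Phi(e_j)$ (resp.\ $-\Phi^*(e_j)$) yields formulas (2.40) and (2.41) directly.

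To obtain the inverse symbols, I would match terms of equal order in $P\circ P^{-1}=I$ for $P\in\{\widetilde{D}_F,\widetilde{D}_F^*\}$. The order-$0$ equation $\sigma_1(P)\sigma_{-1}(P^{-1})=\mathrm{id}$, together with the Clifford identity $c(\xi)^2=-|\xi|^2$, immediately produces $\sigma_{-1}(P^{-1})=\sqrt{-1}\,c(\xi)/|\xi|^2$, which is (2.37) and is manifestly the same for both $\widetilde{D}_F$ and $\widetilde{D}_F^*$. The order $-1$ relation
\begin{equation*}
\sigma_1(P)\,\sigma_{-2}(P^{-1})+\sigma_0(P)\,\sigma_{-1}(P^{-1})+(-i)\sum_j \partial_{\xi_j}\sigma_1(P)\,\partial_{x_j}\sigma_{-1}(P^{-1})=0
\end{equation*}
is then solved for $\sigma_{-2}(P^{-1})$ by multiplying on the left by $\sigma_{-1}(P^{-1})$, substituting $\partial_{\xi_j}\sigma_1(P)=\sqrt{-1}\,c(dx_j)$, and expanding $\partial_{x_j}\sigma_{-1}(P^{-1})$ via the quotient rule. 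The two resulting summands are exactly the $c(\xi)\sigma_0(P)c(\xi)/|\xi|^4$ piece coming from the $\sigma_0\sigma_{-1}$ term and the $\frac{c(\xi)}{|\xi|^6}\sum_j c(dx_j)[\partial_{x_j}c(\xi)|\xi|^2-c(\xi)\partial_{x_j}(|\xi|^2)]$ piece coming from the $\partial_{\xi_j}\sigma_1\cdot\partial_{x_j}\sigma_{-1}$ correction, which are the formulas (2.38) and (2.39).

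There is no serious obstacle in this proof; the entire argument is mechanical once the symbols of $\widetilde{D}_F$ and $\widetilde{D}_F^*$ are correctly identified. The only point requiring care is the bookkeeping of the three $\sqrt{-1}$ factors that appear -- one each from $\sigma_1(P)$, $\sigma_{-1}(P^{-1})$, and $\partial_{\xi_j}\sigma_1(P)$ -- combined with the $(-i)$ in the composition formula and the sign from $c(\xi)^2=-|\xi|^2$. These collapse so that the final formulas carry no imaginary unit beyond the single $\sqrt{-1}$ in $\sigma_{-1}(P^{-1})$, giving the clean expressions stated in the lemma.
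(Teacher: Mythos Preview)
Your proposal is correct and follows essentially the same route as the paper: the paper simply cites the composition formula and (2.2.11) in \cite{Wa3} to obtain the lemma, and your argument is precisely an explicit unpacking of that composition-formula computation together with the identification of $\sigma_1$ and $\sigma_0$ from Definitions~2.2--2.3. There is no meaningful difference in approach.
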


Let us now turn to compute $\Psi$ (see formula (2.18) for definition of $\Psi$). Since the sum is taken over $-r-\ell+k+j+|\alpha|=3,
 \ r, \ell\leq-1$, then we have the boundary term of (2.17) is the sum of  the following five terms.

 {\bf case a)~I)}~$r=-1,~l=-1~k=j=0,~|\alpha|=1$

From (2.18) we have
 \begin{eqnarray}
&&{\rm case~a)~I)}\nonumber\\
&=&-\int_{|\xi'|=1}\int^{+\infty}_{-\infty}\sum_{|\alpha|=1}
{\rm trace} \Big[\partial^\alpha_{\xi'}\pi^+_{\xi_n}\sigma_{-1}(\widetilde{D}_{F}^{-1})\times
\partial^\alpha_{x'}\partial_{\xi_n}\sigma_{-1}((\widetilde{D}^{*}_{F})^{-1})\Big](x_0)\texttt{d}\xi_n\sigma(\xi')\texttt{d}x'\nonumber\\
&&-f\sum_{j<n}\partial_{j}(f^{-1})\int_{|\xi'|=1}\int^{+\infty}_{-\infty}\sum_{|\alpha|=1}
{\rm trace} \Big[\partial^\alpha_{\xi'}\pi^+_{\xi_n}\sigma_{-1}(\widetilde{D}_{F}^{-1})\times
\partial_{\xi_n}\sigma_{-1}((\widetilde{D}^{*}_{F})^{-1})\Big](x_0)\texttt{d}\xi_n\sigma(\xi')\texttt{d}x'\nonumber\\
&=&0.
\end{eqnarray}
And similarly we get

{\bf case a)~II)}~$r=-1,~l=-1~k=|\alpha|=0,~j=1$
 \begin{eqnarray}
&&{\rm case \ a)~II)}\nonumber\\
&=&-\frac{1}{2}\int_{|\xi'|=1}\int^{+\infty}_{-\infty} {\rm
trace} \Big[\partial_{x_n}\pi^+_{\xi_n}\sigma_{-1}( \widetilde{D} _{F} ^{-1})\times
\partial_{\xi_n}^2\sigma_{-1}((\widetilde{D}^{*}_{F})^{-1})\Big](x_0)\texttt{d}\xi_n\sigma(\xi')\texttt{d}x'\nonumber\\
&&-f^{-1}\partial_{x_n}(f)\frac{1}{2}\int_{|\xi'|=1}\int^{+\infty}_{-\infty} {\rm
trace} \Big[\partial_{x_n} \sigma_{-1}( \widetilde{D} _{F} ^{-1})\times
\partial_{\xi_n}^2\sigma_{-1}((\widetilde{D}^{*}_{F})^{-1})\Big](x_0)\texttt{d}\xi_n\sigma(\xi')\texttt{d}x'\nonumber\\
&=&-\frac{3}{8}\pi h'(0) \texttt{dim}F\Omega_3\texttt{d}x'-\frac{\pi i}{2}\Omega_{3}f^{-1}\partial_{x_{n}}(f)\texttt{d}x'.
\end{eqnarray}

 {\bf case a)~III)}~$r=-1,~l=-1~j=|\alpha|=0,~k=1$
 \begin{eqnarray}
 &&{\rm case~ a)~III)}\nonumber\\
&=&-\frac{1}{2}\int_{|\xi'|=1}\int^{+\infty}_{-\infty}
{\rm trace} \Big[\partial_{\xi_n}\pi^+_{\xi_n}\sigma_{-1}(\widetilde{D}_{F}^{-1})\times
\partial_{\xi_n}\partial_{x_n}\sigma_{-1}((\widetilde{D}^{*}_{F})^{-1})\Big](x_0)\texttt{d}\xi_n\sigma(\xi')\texttt{d}x'\nonumber\\
&&-f\partial_{x_{n}}(f^{-1})\Big[\partial_{\xi_n}\pi^+_{\xi_n}\sigma_{-1}(\widetilde{D}_{F}^{-1})\times
\partial_{\xi_n}\partial_{x_n}\sigma_{-1}((\widetilde{D}^{*}_{F})^{-1})\Big](x_0)\texttt{d}\xi_n\sigma(\xi')\texttt{d}x'\nonumber\\
&=&\frac{3}{8}\pi h'(0) \texttt{dim}F\Omega_3\texttt{d}x'+\frac{\pi i}{2}\Omega_{3}f\partial_{x_{n}}(f^{-1})\texttt{d}x'.
\end{eqnarray}

 {\bf case b)}~$r=-2,~l=-1,~k=j=|\alpha|=0$
\begin{eqnarray}
{\rm case~ b)}&=&-i\int_{|\xi'|=1}\int^{+\infty}_{-\infty}
{\rm trace} \Big[\pi^+_{\xi_n}\sigma_{-2}(f\widetilde{D}_{F}^{-1})\times
\partial_{\xi_n}\sigma_{-1}(f^{-1}(\widetilde{D}^{*}_{F})^{-1})\Big](x_0)\texttt{d}\xi_n\sigma(\xi')\texttt{d}x' \nonumber\\
&=&\Big[\frac{9}{8} h'(0) \texttt{dim}F
-\frac{1}{4}\texttt{Tr}\Big(\texttt{id}\otimes \big(\sigma_{n}^{F}-\Phi^{*}(e_{j})\big)\Big)\Big]\pi \Omega_{3}\texttt{d}x'.
\end{eqnarray}

{\bf  case c)}~$r=-1,~l=-2,~k=j=|\alpha|=0$
\begin{eqnarray}
{\rm case~ c)}&=&-i\int_{|\xi'|=1}\int^{+\infty}_{-\infty}{\rm trace}\Big [\pi^+_{\xi_n}\sigma_{-1}(f\widetilde{D}_{F}^{-1})\times
\partial_{\xi_n}\sigma_{-2}(f^{-1}(\widetilde{D}^{*}_{F})^{-1})\Big](x_0)d\xi_n\sigma(\xi')dx'\nonumber\\
&=&\Big[-\frac{9}{8} h'(0) \texttt{dim}F+\frac{1}{4}\texttt{Tr}[\texttt{id}\otimes (\sigma_{n}^{F}+\Phi(e_{n}))]\Big]
\pi \Omega_{3}\texttt{d}x'.
\end{eqnarray}
We note that $\texttt{dim} S(TM)=4$, now $\Psi$  is the sum of the \textbf{case (a, b, c)}, so
\begin{equation}
\sum \textbf{case a, b , c}=\frac{\pi i}{2}\Omega_{3} \big[f\partial_{x_{n}}(f^{-1})- f^{-1}\partial_{x_{n}}(f)\big]\texttt{d}x'+
\texttt{Tr}_{F}\big(\Phi^{*}(e_{n})+\Phi(e_{n})\big)\pi \Omega_{3}\texttt{d}x'.
\end{equation}
Hence we conclude that
\begin{thm}
 Let M be a 4-dimensional compact manifolds   with the boundary $\partial M$, for conformal perturbations of  twisted Dirac operators $\widetilde{D}_{F}$, then
 \begin{eqnarray}
&&\widetilde{Wres}[\pi^{+}\big(f\widetilde{D}_{F}^{-1}\big) \circ\pi^{+}\big(f^{-1}(\widetilde{D}_{F}^{*})^{-1}\big)]\nonumber\\
&=&4 \pi^{2}\int_{M}\Big\{{\bf{Tr}}
 \Big[-\frac{s}{12}+c(\Phi^{*})c(\Phi)
-\frac{1}{4}\sum_{i}\big[c(\Phi^{*})c(e_{i})-c(e_{i})c(\Phi) \big]^{2}
     \nonumber\\
     &&-\frac{1}{2}\sum_{j}\nabla_{e_{j}}^{F}\big(c(\Phi^{*})\big)c(e_{j})-\frac{1}{2}\sum_{j}c(e_{j})\nabla_{e_{j}}^{F}
            \big(c(\Phi)\big)\Big]
\nonumber\\
&&~~-2f^{-1}\Delta(f)+4f^{-1}{\bf{Tr}}_{F}[\Phi(grad_{M}f)]-f^{-2}\big[ |grad_{M}(f)|^{2}+2\Delta(f)\big]\Big\}\texttt{d}vol_{M} \nonumber\\
 &&~~~~+\int_{\partial_{ M}}\frac{\pi i}{2}\Omega_{3} \big[f\partial_{x_{n}}(f^{-1})- f^{-1}\partial_{x_{n}}(f)\big]\texttt{d}x'+
{\bf{Tr}}_{F}\big(\Phi^{*}(e_{n})+\Phi(e_{n})\big)\pi \Omega_{3}\texttt{d}x',
\end{eqnarray}
where  $s$ is the scalar curvature.
\end{thm}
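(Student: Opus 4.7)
The plan is to apply the decomposition (2.17), which splits $\widetilde{Wres}[\pi^{+}(f\widetilde{D}_{F}^{-1}) \circ \pi^{+}(f^{-1}(\widetilde{D}_{F}^{*})^{-1})]$ into an interior piece $Wres[f\widetilde{D}_F^{-1}\circ f^{-1}(\widetilde{D}_F^*)^{-1}]$ and a boundary piece $\int_{\partial M}\Psi$. For the interior piece, I would use the product identity (2.19) to reduce to $Wres[(\widetilde{D}_F^*\widetilde{D}_F - \widetilde{D}_F^* c(df)f^{-1})^{-1}]$, observe that the operator in parentheses is of Laplace type, identify the associated connection and endomorphism $E$ via (2.22)--(2.23) (which was carried out to produce (2.30)), and invoke the Kastler--Kalau--Walze formula (2.31) together with Lemma 2.4 and (2.32) to obtain precisely the interior integrand already recorded in (2.34). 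Nothing further is needed on the interior side.

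The substantive work is the evaluation of $\int_{\partial M}\Psi$. I would first enumerate the index tuples allowed by the constraint $r-k+|\alpha|+\ell-j-1=-4$ with $r,\ell\leq -1$; in dimension four exactly five cases survive, labelled a)I, a)II, a)III, b), c). For each I would substitute the expansions from Lemma 2.6 for $\sigma_{-1}$ and $\sigma_{-2}$ of $\widetilde{D}_F^{-1}$ and $(\widetilde{D}_F^*)^{-1}$, specialized at $x_0\in\partial M$ using the boundary normal coordinates (so $\sigma_j^{S(TM)}(x_0)=0$, $\Gamma^k(x_0)=0$, and only $h'(0)$ survives from the metric warping). The Leibniz rule applied to the compositions $f\widetilde{D}_F^{-1}$ and $f^{-1}(\widetilde{D}_F^*)^{-1}$ produces additional terms involving $\partial_{x_n}(f)$, $\partial_{x_n}(f^{-1})$ and tangential derivatives $\partial_j(f^{-1})$; these feed into cases a)I, a)II, a)III. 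The $\pi^+_{\xi_n}$ projection is then computed by residue calculus around the pole $\xi_n=i|\xi'|$ in the upper half-plane, followed by the $\xi'$ integration over the unit sphere giving the volume $\Omega_3$, and finally the trace over $S(TM)\otimes F$ is taken using the Clifford trace identities recorded at the start of the proof of Lemma 2.4.

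The main obstacle will be the bookkeeping in cases a)II--c): tracking the $f, f^{-1}$ factors through the Leibniz rule, isolating the $h'(0)\,\mathrm{dim}F$ contributions (which are expected to cancel between a)II+b) and a)III+c), as is visible in (2.43)--(2.46)), and correctly extracting the $\Phi$ and $\Phi^*$ contributions in $\sigma_0(\widetilde{D}_F)$ and $\sigma_0(\widetilde{D}_F^*)$ from (2.40)--(2.41). After these cancellations the surviving boundary terms collapse to the two expressions $\frac{\pi i}{2}\Omega_3[f\partial_{x_n}(f^{-1})-f^{-1}\partial_{x_n}(f)]$ and $\pi\Omega_3\,\mathrm{Tr}_F(\Phi^*(e_n)+\Phi(e_n))$ appearing in (2.47). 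Adding this to the interior contribution (2.34) and using $\mathrm{dim}\,S(TM)=4$ assembles the right-hand side of (2.49), which completes the proof.
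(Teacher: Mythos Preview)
Your proposal is correct and follows essentially the same approach as the paper: the decomposition (2.17) into interior plus boundary, the interior term handled via the Laplace-type reduction (2.19)--(2.30), Lemma 2.4 and (2.31)--(2.32) leading to (2.34), and the boundary term $\int_{\partial M}\Psi$ split into the five cases a)I--c) using Lemma 2.6, residue calculus in $\xi_n$, and the Clifford trace identities, with the cancellations you describe yielding (2.47). This is exactly what the paper does.
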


\section{A Kastler-Kalau-Walze Type Theorem for Conformal Perturbations of  twisted signature Operators}
Let us recall the definition of twisted signature operators. We consider a $n$-dimensional oriented Riemannian manifold $(M, g^{M})$.
Let $F$ be a real vector bundle over $M$. let $g^{F}$ be an Euclidean metric on $F$. Let
 \begin{equation}
\wedge^{\ast}(T^{\ast}M)=\bigoplus_{i=0}^{n}\wedge^{i}(T^{\ast}M)
\end{equation}
be the real exterior algebra bundle of $T^{\ast}M$. Let
 \begin{equation}
\Omega^{\ast}(M,F)=\bigoplus_{i=0}^{n}\Omega^{i}(M,F)=\bigoplus_{i=0}^{n}C^{\infty}(M,\wedge^{\ast}(T^{\ast}M)\otimes F)
\end{equation}
be the set of smooth sections of $\wedge^{\ast}(T^{\ast}M)\otimes F$. Let $\ast$ be the Hodge star operator of $g^{TM}$.
It extends  on  $\wedge^{\ast}(T^{\ast}M)\otimes F$ by acting on $F$ as identity. Then $\Omega^{\ast}(M,F)$ inherits the following
standardly induced inner product
 \begin{equation}
\langle \alpha, \beta  \rangle=\int_{M}\langle \alpha\wedge^{\ast}\beta  \rangle_{F},~~~~\alpha, \beta \in\Omega^{\ast}(M,F).
\end{equation}
Denote by $\widehat{\nabla}^{F}$ the non-Euclidean connection on $F$. Let $d^{F}$ be the obvious extension of $\nabla^{F}$ on $\Omega^{\ast}(M,F)$.
 and $\delta^{F}=d^{F\ast}$ be the formal adjoint operator of $d^{F}$ with respect to the inner product.
 Then the differential
operator   $\hat{D}^{F}$  acting on $\Omega^{\ast}(M,F)$  can be defined by
  \begin{equation}
\hat{D}^{F}=d^{F}+\delta^{F}.
\end{equation}
Let
 \begin{equation}
\omega(F,g^{F})=\widehat{\nabla}^{F,\ast}-\widehat{\nabla}^{F},~~\nabla^{F,e}=\nabla^{F}+\frac{1}{2}\omega(F,g^{F}).
\end{equation}
Then $\nabla^{F,e}$ is an Euclidean connection on $(F,g^{F})$.

Let $\nabla^{\wedge^{\ast}(T^{\ast}M)}$ be the Euclidean connection on $\wedge^{\ast}(T^{\ast}M)$ induced canonically by the Levi-Civita
connection $\nabla^{TM}$ of $g^{TM}$. Let $\nabla^{e}$ be the Euclidean connection on $\wedge^{\ast}(T^{\ast}M)\otimes F$ obtained from
 the tensor product of $\nabla^{\wedge^{\ast}(T^{\ast}M)}$ and $\nabla^{F,e}$.
Let $\{e_{1},\cdots,e_{n}\}$ be an oriented (local) orthonormal basis of $TM$. The following result was proved by Proposition in \cite{BZ}.

 \begin{prop} \cite{BZ}
The following identity holds
 \begin{equation}
d^{F}+\delta^{F}=\sum_{i=1}^{n}c(e_{i})\nabla^{e}_{e_{i}}-\frac{1}{2}\sum_{i=1}^{n}\hat{c}(e_{i})\omega(F,g^{F})(e_{i}).
\end{equation}
\end{prop}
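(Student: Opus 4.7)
The plan is to reduce the non-Euclidean operator $d^{F}+\delta^{F}$ to its Euclidean counterpart built from $\nabla^{F,e}$, for which the standard identity $d^{F,e}+\delta^{F,e}=\sum_{i}c(e_{i})\nabla^{e}_{e_{i}}$ is available by integration by parts. Write $\widetilde{\nabla}^{F}$ for the tensor product connection on $\wedge^{*}(T^{*}M)\otimes F$ built from $\nabla^{\wedge^{*}(T^{*}M)}$ and $\widehat{\nabla}^{F}$. In a local orthonormal frame $\{e_{i}\}$ with dual coframe $\{e^{i}\}$, one has the well-known local expression $d^{F}=\sum_{i}e^{i}\wedge\widetilde{\nabla}^{F}_{e_{i}}$, which holds for any connection on $F$ because the Levi-Civita connection on forms is torsion-free. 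Substituting $\widehat{\nabla}^{F}=\nabla^{F,e}-\tfrac{1}{2}\omega(F,g^{F})$ (acting only on the $F$-factor) gives
\[
d^{F}=\sum_{i}e^{i}\wedge\nabla^{e}_{e_{i}}-\tfrac{1}{2}\sum_{i}e^{i}\wedge\omega(F,g^{F})(e_{i}).
\]

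Next I compute the formal adjoint $\delta^{F}=(d^{F})^{*}$ with respect to $\langle\cdot,\cdot\rangle$. The Euclidean piece contributes $\delta^{F,e}=-\sum_{i}\iota_{e_{i}}\nabla^{e}_{e_{i}}$ by the usual computation. For the zeroth-order piece I use $(e^{i}\wedge)^{*}=\iota_{e_{i}}$ and the fact that $\omega(F,g^{F})(X)$ acts only on the $F$-factor so it commutes with the form operators, giving $(e^{i}\wedge\omega(e_{i}))^{*}=\iota_{e_{i}}\omega(e_{i})^{*}$. The decisive point is the pointwise $g^{F}$-symmetry $\omega(X)^{*}=\omega(X)$: combining the defining relation $X g^{F}(s_{1},s_{2})=g^{F}(\widehat{\nabla}^{F}_{X}s_{1},s_{2})+g^{F}(s_{1},\widehat{\nabla}^{F,*}_{X}s_{2})$ with its companion obtained by exchanging $\widehat{\nabla}^{F}$ and $\widehat{\nabla}^{F,*}$ (valid since $(\widehat{\nabla}^{F,*})^{*}=\widehat{\nabla}^{F}$) and subtracting yields $g^{F}(\omega(X)s_{1},s_{2})=g^{F}(s_{1},\omega(X)s_{2})$. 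Consequently,
\[
\delta^{F}=-\sum_{i}\iota_{e_{i}}\nabla^{e}_{e_{i}}-\tfrac{1}{2}\sum_{i}\iota_{e_{i}}\omega(F,g^{F})(e_{i}).
\]

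Adding the two displayed formulas, the derivative parts collapse into $\sum_{i}(e^{i}\wedge-\iota_{e_{i}})\nabla^{e}_{e_{i}}=\sum_{i}c(e_{i})\nabla^{e}_{e_{i}}$, while the zeroth-order contributions combine with a common sign into
\[
-\tfrac{1}{2}\sum_{i}\bigl(e^{i}\wedge+\iota_{e_{i}}\bigr)\omega(F,g^{F})(e_{i})=-\tfrac{1}{2}\sum_{i}\hat{c}(e_{i})\omega(F,g^{F})(e_{i}),
\]
which is the asserted identity.

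The main obstacle is the self-adjointness $\omega(X)^{*}=\omega(X)$; this is the precise reason $\hat{c}$ rather than $c$ appears in the correction term, and it distinguishes the present non-unitary situation from the familiar one in which a metric connection produces a skew-symmetric defect. A secondary but easy check is that the order of the factors in $(e^{i}\wedge\omega(e_{i}))^{*}$ causes no extra signs, since $\omega(e_{i})$ and the form operators act on disjoint tensor factors; with that verified, the calculation above is essentially algebraic.
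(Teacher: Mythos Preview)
Your argument is correct. The paper itself does not give a proof of this proposition; it simply cites the result from Bismut--Zhang \cite{BZ}, so there is no in-paper proof to compare against. What you have written is essentially the standard derivation (and the one in \cite{BZ}): express $d^{F}$ via exterior multiplication and the tensor-product connection, split off the Euclidean connection $\nabla^{F,e}$ plus the zeroth-order defect $\omega(F,g^{F})$, take adjoints using the $g^{F}$-self-adjointness of $\omega(X)$, and collect terms into $c$ and $\hat{c}$. Your emphasis on why $\omega(X)^{*}=\omega(X)$ is the right point to highlight, since that is exactly what forces the correction to appear with $\hat{c}$ rather than $c$.
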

Let
 \begin{equation}
D_{F}^{e}=\sum_{j=1}^{n}c(e_{j})\nabla^{e}_{e_{j}},
\end{equation}
then the  twisted signature operators $\hat{D}_{F}$, $\hat{D}^{*}_{F}$ as follows.
 \begin{defn}\label{1}
For sections $\psi\otimes \chi\in \wedge^{\ast}(T^{\ast}M)\otimes F$,
\begin{eqnarray}\label{eq:1}
&&\hat{D}_{F}(\psi\otimes \chi)=D_{F}^{e}(\psi\otimes \chi)
 -\frac{1}{2}\sum_{i=1}^{n}\hat{c}(e_{i})\omega(F,g^{F})(e_{i})(\psi\otimes \chi),\\
 \label{eq:2}
&&\hat{D}^{*}_{F}(\psi\otimes \chi)=D_{F}^{*,e}(\psi\otimes \chi)
-\frac{1}{2}\sum_{i=1}^{n}\hat{c}(e_{i})\omega^{*}(F,g^{F})(e_{i})(\psi\otimes \chi).
\end{eqnarray}
Here $\omega^{*}(F,g^{F})(e_{i})$ denotes the adjoint of $\omega(F,g^{F})(e_{i})$.
\end{defn}

In the following, we will compute the more general case
$\widetilde{Wres}\big[\pi^{+}\big(f\hat{D}_{F}^{-1}\big) \circ \pi^{+}\big(f^{-1}(\hat{D}_{F}^{*})^{-1}\big)\big]$ for nonzero smooth functions $f, f^{-1}$.
 Denote by $\sigma_{l}(A)$ the $l$-order symbol of an operator A. An application of (3.5) and (3.6) in \cite{Wa1} shows that
  \begin{eqnarray}
\widetilde{Wres}\big[\pi^{+}\big(f\hat{D}_{F}^{-1} \big)\circ\pi^{+}\big(f^{-1}(\hat{D}_{F}^{*})^{-1}\big)\big]
&=&Wres[f\hat{D}_{F}^{-1} f^{-1}(\hat{D}_{F}^{*})^{-1}]+\int_{\partial M}\widetilde{\Psi}\nonumber\\
&=&Wres\big[(\hat{D}_{F}^{*}f \hat{D}_{F}f^{-1})^{-1} \big]+\int_{\partial M}\widetilde{\Psi}\nonumber\\
&=&Wres\big[\big(\hat{D}_{F}^{*}\hat{D}_{F}-\hat{D}_{F}^{*}c(df)f^{-1}\big)^{-1} \big]+\int_{\partial M}\widetilde{\Psi}
\end{eqnarray}
where
 \begin{eqnarray}
\widetilde{\Psi}&=&\int_{|\xi'|=1}\int_{-\infty}^{+\infty}\sum_{j,k=0}^{\infty}\sum \frac{(-i)^{|\alpha|+j+k+\ell}}{\alpha!(j+k+1)!}
{\bf{trace}}_{S(TM)\otimes F}\Big[\partial_{x_{n}}^{j}\partial_{\xi'}^{\alpha}\partial_{\xi_{n}}^{k}\sigma_{r}^{+}
(f\hat{D}_{F}^{-1})(x',0,\xi',\xi_{n})\nonumber\\
&&\times\partial_{x_{n}}^{\alpha}\partial_{\xi_{n}}^{j+1}\partial_{x_{n}}^{k}\sigma_{l}(f^{-1}(\hat{D}_{F}^{*})^{-1})(x',0,\xi',\xi_{n})\Big]
\texttt{d}\xi_{n}\sigma(\xi')\texttt{d}x' ,
\end{eqnarray}
and the sum is taken over $r-k+|\alpha|+\ell-j-1=-n,r\leq-1,\ell\leq-1$.

 Let  $\hat{c}(\omega)=\sum_{i}c(e_{i}) \omega(F,g^{F})(e_{i})$
 and $\hat{c}(\omega^{*})=\sum_{i}c(e_{i}) \omega^{*}(F,g^{F})(e_{i})$, then
\begin{eqnarray}\label{eq:3}
&&\hat{D}_{F}^{*}\hat{D}_{F}-\hat{D}_{F}^{*}c(df)f^{-1} \nonumber\\
&=&-g^{ij}\partial_{i}\partial_{j}-2\sigma^{j}_{\wedge^{\ast}(T^{\ast}M)\otimes F}\partial_{j}+\Gamma^{k}\partial_{k}   -c(e_{i})c(df)f^{-1}\partial_{i}
                -\frac{1}{2}\sum_{j}\Big[\hat{c}(\omega^{*})c(e_{j})+c(e_{j})\otimes \hat{c}(\omega) \Big]e_{j}\nonumber\\
                &&-g^{ij}\Big[\partial_{i}(\sigma^{j,e}_{\wedge^{\ast}(T^{\ast}M)\otimes F}) +\sigma^{i}_{\wedge^{\ast}(T^{\ast}M)\otimes F}
                \sigma^{j,e}_{\wedge^{\ast}(T^{\ast}M)\otimes F,e}
                  -\Gamma_{ij}^{k}\sigma_{\wedge^{\ast}(T^{\ast}M)\otimes F}^{k}\Big]\nonumber\\
                &&-\frac{1}{2}\sum_{j}g^{ij}\hat{c}(\omega^{*})c(e_{j})\sigma^{\wedge^{\ast}(T^{\ast}M)\otimes F,e}_{j}
               -\frac{1}{2}\sum_{j}g^{ij}c(e_{j})\otimes e_{j}\big(\hat{c}(\omega)\big) \nonumber\\
                &&-\frac{1}{2}\sum_{j}g^{ij}c(e_{j})\sigma_{j}^{\wedge^{\ast}(T^{\ast}M)\otimes F,e}\otimes \hat{c}(\omega)
                +\frac{1}{4}\hat{c}(\omega^{*})\hat{c}(\omega)\nonumber\\
                               &&+\frac{1}{4}s+\frac{1}{2}\sum_{i\neq j} R^{F,e}(e_{i},e_{j})c(e_{i})c(e_{j})\nonumber\\
&&-\frac{1}{4}\sum_{i}g^{ij}c\sum_{s,t}\omega _{s,t}(e_{i}) \Big[\hat{c}(\omega^{*})c(e_{j})+c(e_{j})\hat{c}(\omega)\Big]c(df)f^{-1}\nonumber\\
&&-\sum_{i}g^{ij}c(e_{j})\sigma^{j,e}_{\wedge^{\ast}(T^{\ast}M)\otimes F,e}c(df)f^{-1}-\frac{1}{2} \hat{c}(e_{j})\omega^{*}(F,g^{F})c(df)f^{-1}.
\end{eqnarray}
In terms of local coordinates $\{\partial_{i}\}$ inducing the  coordinate transformation
 $e_{j}=\sum_{k=1}^{n}\langle e_{j}, \texttt{d}x^{k}\rangle \partial_{k}$, then
\begin{eqnarray}
\omega_{j}&=&\sigma^{j}_{\wedge^{\ast}(T^{\ast}M)}+\sigma^{j,e}_{F}+c(e_{j}) c(df)f^{-1}+\frac{1}{2}\Gamma^{j}\nonumber\\
&&+\frac{1}{4}\Big(\sum_{k=1}^{n}\langle e_{k}, \texttt{d}x^{j}\rangle \hat{c}(\omega^{*}(F,g^{F}))c(e_{k})
                 -\sum_{k=1}^{n}\langle e_{k}, \texttt{d}x^{j}\rangle c(e_{k})\hat{c}(\omega(F,g^{F}))\Big).
\end{eqnarray}
 For a smooth vector field $X\in \Gamma(M,TM)$,  then
 \begin{equation}
\nabla'_{X}=\nabla^{ \wedge^{\ast}(T^{\ast}M)\otimes F,e}_{X}+c(X) c(df)f^{-1}+\frac{1}{4}[\hat{c}(\omega^{*}(F,g^{F}))c(X)-c(X)\hat{c}(\omega(F,g^{F}))].
\end{equation}

Since $E$ is globally defined on $M$, so we can perform
computations of $E$ in normal coordinates. In terms of normal coordinates about $x_{0}$ one has:
$\sigma^{j}_{\wedge^{\ast}(T^{\ast}M)}(x_{0})=0$, $e_{j}\big(c(e_{i})\big)(x_{0})=0$, $\Gamma^{k}(x_{0})=0$.
From (3.12) and (3.13), we obtain
\begin{eqnarray}
E(x_{0})&=&-\frac{1}{4}s-\frac{1}{2}\sum_{i\neq j} R^{F,e}(e_{i},e_{j})c(e_{i})c(e_{j})
         -\frac{1}{16}\sum_{i}\Big[\hat{c}(\omega^{*})c(e_{i})-c(e_{i})\hat{c}(\omega) \Big]^{2}
         -\frac{1}{4}\hat{c}(\omega^{*})\hat{c}(\omega)\nonumber\\
        && -\frac{1}{4}\sum_{j}\nabla_{e_{j}}^{F}\big(\hat{c}(\omega^{*})\big)c(e_{j})
        +\frac{1}{4}\sum_{j}c(e_{j})\nabla_{e_{j}}^{F}\big(\hat{c}(\omega)\big)\nonumber\\
&&+\frac{1}{4}\sum_{i}c(e_{j})\sum_{s,t}\omega _{s,t}(e_{i}) \Big[\hat{c}(\omega^{*})c(e_{j})+c(e_{j})\hat{c}(\omega)\Big]c(df)f^{-1} \nonumber\\
&&+\sum_{i}c(e_{j})\sigma^{j,e}_{\wedge^{\ast}(T^{\ast}M)\otimes F,e}c(df)f^{-1}-\frac{1}{2} \hat{c}(e_{j})\omega^{*}(F,g^{F})c(df)f^{-1}\nonumber\\
&&-\partial_{j}(c(\partial_{j})c(df)f^{-1})\nonumber\\
&&-\frac{1}{2}\big[c(e_{i}) c(df)f^{-1}+\frac{1}{4}\Big(\sum_{k=1}^{n}\langle e_{k}, \texttt{d}x^{j}\rangle \hat{c}(\omega^{*}(F,g^{F}))c(e_{k})
                 -\sum_{k=1}^{n}\langle e_{k}, \texttt{d}x^{j}\rangle c(e_{k})\hat{c}(\omega(F,g^{F}))\Big) \big]c(\partial_{i}) c(df)f^{-1}\nonumber\\
&&-\frac{1}{2}c(\partial_{i}) c(df)f^{-1}\big[c(e_{i}) c(df)f^{-1}
 +\frac{1}{4}\Big(\sum_{k=1}^{n}\langle e_{k}, \texttt{d}x^{j}\rangle \hat{c}(\omega^{*}(F,g^{F}))c(e_{k})
                 -\sum_{k=1}^{n}\langle e_{k}, \texttt{d}x^{j}\rangle c(e_{k})\hat{c}(\omega(F,g^{F}))\Big)\big]\nonumber\\
          &&-\frac{1}{4}c(\partial_{i}) c(df)f^{-1}c(\partial_{i}) c(df)f^{-1},
\end{eqnarray}
From (3.15) and Lemma 2.4 we obtain
\begin{eqnarray}
\texttt{Tr}(E(x_{0}))
        &=&{\bf{Tr}}\Big[-\frac{1}{4}s +\frac{n}{16}[\hat{c}(\omega^{*})-\hat{c}(\omega)]^{2}
         -\frac{1}{4}\hat{c}(\omega^{*})\hat{c}(\omega) -\frac{1}{4}\sum_{j}\nabla_{e_{j}}^{F}\big(\hat{c}(\omega^{*})\big)c(e_{j})\nonumber\\
       && +\frac{1}{4}\sum_{j}c(e_{j})\nabla_{e_{j}}^{F}\big(\hat{c}(\omega)\big) \Big]\nonumber\\
&& +{\bf{Tr}} \Big[-\partial_{j}(c(\partial_{j})c(df)f^{-1})
-\frac{5}{4} c(e_{i}) c(df)f^{-1} c(\partial_{i}) c(df)f^{-1}\Big]\nonumber\\
 &=&{\bf{Tr}}\Big[-\frac{1}{4}s +\frac{n}{16}[\hat{c}(\omega^{*})-\hat{c}(\omega)]^{2}
         -\frac{1}{4}\hat{c}(\omega^{*})\hat{c}(\omega) -\frac{1}{4}\sum_{j}\nabla_{e_{j}}^{F}\big(\hat{c}(\omega^{*})\big)c(e_{j})\nonumber\\
       && +\frac{1}{4}\sum_{j}c(e_{j})\nabla_{e_{j}}^{F}\big(\hat{c}(\omega)\big) \Big]\nonumber\\
&& +4f^{-1}\Delta(f)+8\langle grad_{M}(f) , grad_{M}(f^{-1}) \rangle-5f^{-2}\big[ |grad_{M}(f)|^{2}+2\Delta(f)\big].
\end{eqnarray}
Hence we conclude that
\begin{thm}
For even $n$-dimensional oriented  compact Riemainnian manifolds without boundary,
the following equality holds:
\begin{eqnarray}
&& Wres(f\hat{D}_{F}^{-1} \circ f^{-1}(\hat{D}_{F}^{*})^{-1})^{(\frac{n-2}{2})}\nonumber\\
&=&\frac{(2\pi)^{\frac{n}{2}}}{(\frac{n}{2}-2)!}\int_{M}{\bf{Tr}}
 \Big[-\frac{s}{12} +\frac{n}{16}[\hat{c}(\omega^{*})-\hat{c}(\omega)]^{2}
         -\frac{1}{4}\hat{c}(\omega^{*})\hat{c}(\omega)\nonumber\\
       && -\frac{1}{4}\sum_{j}\nabla_{e_{j}}^{F}\big(\hat{c}(\omega^{*})\big)c(e_{j})
        +\frac{1}{4}\sum_{j}c(e_{j})\nabla_{e_{j}}^{F}\big(\hat{c}(\omega)\big) \Big]\nonumber\\
 && +4f^{-1}\Delta(f)+8\langle grad_{M}(f) , grad_{M}(f^{-1}) \rangle-5f^{-2}\big[ |grad_{M}(f)|^{2}+2\Delta(f)\big]\texttt{d}vol_{M}.
\end{eqnarray}
\end{thm}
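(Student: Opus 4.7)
The plan is to mirror the structure already used for Theorem 2.5 but with the signature-operator data $(\hat{c}(\omega), \hat{c}(\omega^{*}),\wedge^{\ast}(T^{\ast}M)\otimes F)$ replacing the spin data $(c(\Phi), c(\Phi^{*}),S(TM)\otimes F)$. First I would use the algebraic identity already recorded in (3.10), which rewrites
\[
f\hat{D}_{F}^{-1}\circ f^{-1}(\hat{D}_{F}^{*})^{-1}
   =\bigl(\hat{D}_{F}^{*}\hat{D}_{F}-\hat{D}_{F}^{*}c(df)f^{-1}\bigr)^{-1},
\]
so that the noncommutative residue we must compute is the residue of the inverse of a single second-order operator. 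The formula (3.12) displays this operator in local coordinates and shows it has the shape $-(g^{ij}\partial_{i}\partial_{j}+A^{i}\partial_{i}+B)$ of (2.20), hence it is of Laplace type.

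Next I would invoke the universal Kastler--Kalau--Walze formula (the analogue of (2.31)) for a generalized Laplacian $P$:
\[
Wres\bigl[P^{-1}\bigr]^{(\tfrac{n-2}{2})}
 =\frac{(2\pi)^{n/2}}{(n/2-2)!}\int_{M}\mathbf{Tr}\Bigl(\tfrac{s}{6}+E\Bigr)\,\mathrm{d}vol_{M}.
\]
So the whole task reduces to identifying $E$ for $P=\hat{D}_{F}^{*}\hat{D}_{F}-\hat{D}_{F}^{*}c(df)f^{-1}$. This is carried out in two steps: first, read off the $A^{i}$ and $B$ from (3.12) and use the relation (2.22) to extract the connection one-form $\omega_{j}$; the result is the expression (3.13). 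Second, substitute $\omega_{j}$ and $B$ into (2.23) to obtain the expression (3.15) for $E(x_{0})$, using normal coordinates at $x_{0}$ so that $\sigma^{j}_{\wedge^{*}(T^{*}M)}(x_{0})=0$, $e_{j}(c(e_{i}))(x_{0})=0$ and $\Gamma^{k}(x_{0})=0$ eliminate many cross terms.

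The final step is to trace $E(x_{0})$. Here the Clifford traces from Lemma 2.4 kill the summands of (3.15) that are odd in $c(e_{j})$ and reduce the purely $f$-dependent part to $4f^{-1}\Delta(f)+8\langle\mathrm{grad}_{M}f,\mathrm{grad}_{M}f^{-1}\rangle-5f^{-2}\bigl[|\mathrm{grad}_{M}f|^{2}+2\Delta(f)\bigr]$; combining with the pure signature contribution yields (3.16). Plugging $\mathbf{Tr}(E(x_{0}))$ together with $\mathbf{Tr}(s/6)=ns/6$ into the universal formula and collecting $-s/4+ns/(6n)=-s/12$ produces exactly the claimed right-hand side.

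I expect the principal difficulty to be step three, namely the careful bookkeeping needed to derive and simplify $E$. Unlike the Dirac case of Section 2, the signature operator introduces the twisted Clifford action $\hat{c}$ together with the non-Euclidean connection piece $\omega(F,g^{F})$, so the square $[\hat{c}(\omega^{*})-\hat{c}(\omega)]^{2}$ and the mixed products $c(e_{j})\hat{c}(\omega)$, $\hat{c}(\omega^{*})c(e_{j})$, as well as their interaction with $c(df)f^{-1}$, produce a large number of terms in (3.12)--(3.15) that must be regrouped using the symmetries $c(e_{i})c(e_{j})+c(e_{j})c(e_{i})=-2\delta_{ij}$ and the vanishing of Christoffel symbols at $x_{0}$. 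Once those cancellations are executed, the pattern is identical to the one established for $\widetilde{D}_{F}$ and the theorem follows.
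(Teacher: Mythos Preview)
Your proposal is correct and follows essentially the same approach as the paper: rewrite via (3.10), recognize (3.12) as Laplace type, extract $\omega_{j}$ and then $E(x_{0})$ in normal coordinates as in (3.13)--(3.15), take the trace using Lemma~2.4 to reach (3.16), and substitute into the universal residue formula (2.31). This is exactly what the paper does (with the phrase ``Hence we conclude that'' standing in for your last substitution step).
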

Locally we can use Theorem 3.3 to compute the interior term of (3.10), then
\begin{eqnarray}
&&\int_{M}\int_{|\xi|=1}{\bf{trace}}_{\wedge^{\ast}(T^{\ast}M)\otimes F}
  [\sigma_{-4}((f\hat{D}_{F}^{-1} \circ f^{-1}(\hat{D}_{F}^{*})^{-1})]\sigma(\xi)\texttt{d}x\nonumber\\
&=&4\pi^{2}\int_{M}{\bf{Tr}}
 \Big[-\frac{s}{12} +\frac{1}{4}[\hat{c}(\omega^{*})-\hat{c}(\omega)]^{2}
         -\frac{1}{4}\hat{c}(\omega^{*})\hat{c}(\omega)-\frac{1}{4}\sum_{j}\nabla_{e_{j}}^{F}\big(\hat{c}(\omega^{*})\big)c(e_{j})\nonumber\\
       &&  +\frac{1}{4}\sum_{j}c(e_{j})\nabla_{e_{j}}^{F}\big(\hat{c}(\omega)\big) \Big]\nonumber\\
&& +4f^{-1}\Delta(f)+8\langle grad_{M}(f) , grad_{M}(f^{-1}) \rangle-5f^{-2}\big[ |grad_{M}(f)|^{2}+2\Delta(f)\big]\texttt{d}vol_{M}.
\end{eqnarray}

So we only need to compute $\int_{\partial M} \widetilde{\Psi}$.  In the local coordinates $\{x_{i}; 1\leq i\leq n\}$ and the fixed orthonormal frame
$\{\widetilde{e_{1}},\cdots, \widetilde{e_{n}}\}$, the connection matrix $(\omega_{s,t})$ is defined by
\begin{equation}
\widetilde{\nabla}(\widetilde{e_{1}},\cdots, \widetilde{e_{n}})=(\widetilde{e_{1}},\cdots, \widetilde{e_{n}})(\omega_{s,t}).
\end{equation}

 Let $M$ be a $4$-dimensional compact oriented Riemannian manifold with boundary $\partial M$ and the metric of (2.6).
$ \hat{D}_{F} =d^{F}+\delta^{F}:~C^{\infty}(M,\wedge^{\ast}(T^{\ast}M)\otimes F)\rightarrow C^{\infty}(M,\wedge^{\ast}(T^{\ast}M)\otimes F)$
is the twisted  signature operator. Take the coordinates and
the orthonormal frame as in Section 2.
 Let $\epsilon (\widetilde{e_j*} ),~\iota (\widetilde{e_j*} )$ be the exterior and interior multiplications respectively. Write
 \begin{equation}
c(\widetilde{e_j})=\epsilon (\widetilde{e_j*} )-\iota (\widetilde{e_j*} );~~
\hat{c}(\widetilde{e_j})=\epsilon (\widetilde{e_j*} )+\iota (\widetilde{e_j*} ).
\end{equation}
  We'll compute ${\rm tr}_{\wedge^*(T^*M)\otimes F}$ in the frame $\{e^{\ast}_{i_1}\wedge\cdots\wedge
e^{\ast}_{i_k}|~1\leq i_1<\cdots<i_k\leq 4\}.$ By (3.2) in \cite{Wa3}, we have
\begin{eqnarray}
 \hat{D}_{F}&=&d^{F}+\delta^{F}=\sum_{i=1}^{n}c(e_{i})\nabla^{e}_{e_{i}}-\frac{1}{2}\sum_{i=1}^{n}\hat{c}(e_{i})\omega(F,g^{F})(e_{i})\nonumber\\
    &=&\sum_{i=1}^{n}c(e_{i})\Big(\nabla_{e_{i}}^{\wedge^{\ast}(T^{\ast}M)}\otimes id_{F}+id_{\wedge^{\ast}(T^{\ast}M)} \otimes \nabla^{F,e}_{e_{i}} \Big)
    -\frac{1}{2}\sum_{i=1}^{n}\hat{c}(e_{i})\omega(F,g^{F})(e_{i})\nonumber\\
    &=&\sum^n_{i=1}c(\widetilde{e_i})\Big[\widetilde{e_i}+\frac{1}{4}\sum_{s,t}\omega_{s,t}
(\widetilde{e_i})[\hat{c}(\widetilde{e_s})\hat{c}(\widetilde{e_t})-c(\widetilde{e_s})c(\widetilde{e_t})]\otimes id_{F}
  \nonumber\\
   &&+id_{\wedge^{\ast}(T^{\ast}M)}\otimes \sigma^{F,e}_{i}\Big]-\frac{1}{2}\sum_{i=1}^{n}\hat{c}(e_{i})\omega(F,g^{F})(e_{i}),\\
\hat{D}^{*}_{F}&=&\sum^n_{i=1}c(\widetilde{e_i})\Big[\widetilde{e_i}+\frac{1}{4}\sum_{s,t}\omega_{s,t}
(\widetilde{e_i})[\hat{c}(\widetilde{e_s})\hat{c}(\widetilde{e_t})-c(\widetilde{e_s})c(\widetilde{e_t})]\otimes id_{F}
  \nonumber\\
   &&+id_{\wedge^{\ast}(T^{\ast}M)}\otimes \sigma^{F,e}_{i}\Big]-\frac{1}{2}\sum_{i=1}^{n}\hat{c}(e_{i})\omega^{*}(F,g^{F})(e_{i}).
\end{eqnarray}
Then
\begin{eqnarray}
\sigma_1(\hat{D}_{F})&=&\sigma_1(\hat{D}^{*}_{F})=\sqrt{-1}c(\xi);\\
\sigma_0(\hat{D}_{F})&=&\sum^n_{i=1}c(\widetilde{e_i})\Big[\frac{1}{4}\sum_{s,t}\omega_{s,t}
(\widetilde{e_i})[\hat{c}(\widetilde{e_s})\hat{c}(\widetilde{e_t})-c(\widetilde{e_s})c(\widetilde{e_t})]\otimes \texttt{id}_{F}
  +id_{\wedge^{\ast}(T^{\ast}M)}\otimes \sigma^{F,e}_{i}\Big]\nonumber\\
   &&-\frac{1}{2}\sum_{i=1}^{n}\hat{c}(e_{i})\omega(F,g^{F})(e_{i});\\
\sigma_0(\hat{D}^{*}_{F})&=&\sum^n_{i=1}c(\widetilde{e_i})\Big[\frac{1}{4}\sum_{s,t}\omega_{s,t}
(\widetilde{e_i})[\hat{c}(\widetilde{e_s})\hat{c}(\widetilde{e_t})-c(\widetilde{e_s})c(\widetilde{e_t})]\otimes \texttt{id}_{F}
  +id_{\wedge^{\ast}(T^{\ast}M)}\otimes \sigma^{F,e}_{i}\Big]\nonumber\\
   &&-\frac{1}{2}\sum_{i=1}^{n}\hat{c}(e_{i})\omega^{*}(F,g^{F})(e_{i}).
\end{eqnarray}
By the composition formula of pseudodifferential operators in Section 2.2.1 of \cite{Wa3}, we have
 \begin{lem}\label{le:31}
The symbol of the  twisted signature operators  $\hat{D}^{*}_{F}, \hat{D}_{F}$ as follows:
\begin{eqnarray}
\sigma_{-1}((\hat{D}_{F})^{-1})&=&\sigma_{-1}((\hat{D}^{*}_{F})^{-1})=\frac{\sqrt{-1}c(\xi)}{|\xi|^{2}}; \\
\sigma_{-2}((\hat{D}_{F})^{-1})&=&\frac{c(\xi)\sigma_{0}(\hat{D}_{F})c(\xi)}{|\xi|^{4}}+\frac{c(\xi)}{|\xi|^{6}}\sum_{j}c(\texttt{d}x_{j})
\Big[\partial_{x_{j}}(c(\xi))|\xi|^{2}-c(\xi)\partial_{x_{j}}(|\xi|^{2})\Big];\\
\sigma_{-2}((\hat{D}^{*}_{F})^{-1})&=&\frac{c(\xi)\sigma_{0}(\hat{D}^{*}_{F})c(\xi)}{|\xi|^{4}}+\frac{c(\xi)}{|\xi|^{6}}\sum_{j}c(\texttt{d}x_{j})
\Big[\partial_{x_{j}}(c(\xi))|\xi|^{2}-c(\xi)\partial_{x_{j}}(|\xi|^{2})\Big].
\end{eqnarray}
\end{lem}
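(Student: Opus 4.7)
The plan is to derive the symbol expansions by inverting $\hat{D}_F$ and $\hat{D}_F^*$ order by order, using the composition formula of pseudodifferential operators together with the principal/subprincipal symbol expressions (3.23)--(3.25) established immediately before the lemma. Since $\hat{D}_F$ and $\hat{D}_F^*$ share the same principal symbol $\sqrt{-1}c(\xi)$, the computations for $\sigma_{-1}$ are identical, and the $\sigma_{-2}$ computations differ only by replacing $\sigma_0(\hat{D}_F)$ with $\sigma_0(\hat{D}_F^*)$; I would therefore do the work for $\hat{D}_F^{-1}$ in detail and transcribe the result for $(\hat{D}_F^*)^{-1}$.

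First I would fix the symbol-of-inverse identity. If $Q$ is a parametrix for $\hat{D}_F$, the composition formula gives $\sigma(\hat{D}_F \circ Q) \sim \sum_{\alpha} \frac{(-i)^{|\alpha|}}{\alpha!}\partial_\xi^\alpha \sigma(\hat{D}_F)\,\partial_x^\alpha \sigma(Q) \sim 1$. Collecting homogeneous terms of order $0$ yields
\begin{equation*}
\sigma_1(\hat{D}_F)\,\sigma_{-1}(Q) = 1,
\end{equation*}
and from $\sigma_1(\hat{D}_F)=\sqrt{-1}c(\xi)$ together with the Clifford relation $c(\xi)^2=-|\xi|^2$ one obtains $\sigma_{-1}(\hat{D}_F^{-1})=\sqrt{-1}c(\xi)/|\xi|^2$. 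This establishes (3.26).

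Second, for the subprincipal term, collecting homogeneous terms of order $-1$ in the composition $\hat{D}_F\circ\hat{D}_F^{-1}=I$ gives
\begin{equation*}
\sigma_1(\hat{D}_F)\,\sigma_{-2}(\hat{D}_F^{-1}) + \sigma_0(\hat{D}_F)\,\sigma_{-1}(\hat{D}_F^{-1}) + (-i)\sum_j \partial_{\xi_j}\sigma_1(\hat{D}_F)\,\partial_{x_j}\sigma_{-1}(\hat{D}_F^{-1}) = 0.
\end{equation*}
Left-multiplying by $\sigma_{-1}(\hat{D}_F^{-1}) = \sqrt{-1}c(\xi)/|\xi|^2$ (which inverts $\sigma_1(\hat{D}_F)$), substituting $\partial_{\xi_j}\sigma_1(\hat{D}_F)=\sqrt{-1}c(dx_j)$, and using the identity $\partial_{x_j}(c(\xi)/|\xi|^2)=(\partial_{x_j}c(\xi))/|\xi|^2 - c(\xi)\partial_{x_j}(|\xi|^2)/|\xi|^4$, I would regroup to obtain
\begin{equation*}
\sigma_{-2}(\hat{D}_F^{-1}) = \frac{c(\xi)\sigma_0(\hat{D}_F)c(\xi)}{|\xi|^4} + \frac{c(\xi)}{|\xi|^6}\sum_j c(dx_j)\bigl[\partial_{x_j}(c(\xi))|\xi|^2 - c(\xi)\partial_{x_j}(|\xi|^2)\bigr],
\end{equation*}
which is (3.27); signs are controlled by the two factors of $\sqrt{-1}$ produced by $\sigma_1$ and $\partial_{\xi_j}\sigma_1$ against the $(-i)$ of the composition formula. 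The same derivation with $\sigma_0(\hat{D}_F^*)$ in place of $\sigma_0(\hat{D}_F)$ yields (3.28), since only the zeroth-order part of the operator changes.

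The main obstacle will be bookkeeping the non-commutativity in the $\sigma_0$ piece: $\sigma_0(\hat{D}_F)$ contains the skew combinations $\hat{c}(\tilde e_s)\hat c(\tilde e_t)-c(\tilde e_s)c(\tilde e_t)$, the connection term $\sigma^{F,e}_i$ on $F$, and the zeroth-order piece $-\tfrac12\hat c(e_i)\omega(F,g^F)(e_i)$ from Proposition 3.1, and these do not commute with $c(\xi)$; one must therefore keep the ordering $c(\xi)\,\sigma_0(\hat{D}_F)\,c(\xi)$ intact and avoid any premature simplification using $c(\xi)^2=-|\xi|^2$ inside this product. The derivative terms $\partial_{x_j}c(\xi)$ must also be handled carefully because, unlike in flat coordinates, $c(\xi)$ depends on $x$ through the metric appearing in $c(dx_j)$. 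Once this is respected, matching the expression term-by-term with the formula stated in Lemma 3.5 is routine, and the result for $(\hat{D}_F^*)^{-1}$ follows by the identical argument.
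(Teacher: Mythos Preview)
Your proposal is correct and follows essentially the same approach as the paper: the paper simply invokes ``the composition formula of pseudodifferential operators in Section 2.2.1 of \cite{Wa3}'' without further detail, and you have spelled out exactly that computation, matching order $0$ and order $-1$ terms in $\sigma(\hat D_F)\circ\sigma(\hat D_F^{-1})\sim 1$ to extract $\sigma_{-1}$ and $\sigma_{-2}$. The sign bookkeeping you describe is accurate, and your remark that the $(\hat D_F^*)^{-1}$ case follows by replacing $\sigma_0(\hat D_F)$ with $\sigma_0(\hat D_F^*)$ is precisely how the paper treats it as well.
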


Since $ \widetilde{\Psi}$ is a global form on $\partial M$, so for any fixed point $x_{0}\in\partial M$, we can choose the normal coordinates
$U$ of $x_{0}$ in $\partial M$(not in $M$) and compute $ \widetilde{\Psi}(x_{0})$ in the coordinates $\widetilde{U}=U\times [0,1)$ and the metric
$\frac{1}{h(x_{n})}g^{\partial M}+\texttt{d}x _{n}^{2}$. The dual metric of $g^{\partial M}$ on $\widetilde{U}$ is
$\frac{1}{\tilde{h}(x_{n})}g^{\partial M}+\texttt{d}x _{n}^{2}.$ Write
$g_{ij}^{M}=g^{M}(\frac{\partial}{\partial x_{i}},\frac{\partial}{\partial x_{j}})$;
$g^{ij}_{M}=g^{M}(d x_{i},dx_{j})$, then

\begin{equation}
[g_{i,j}^{M}]=
\begin{bmatrix}\frac{1}{h( x_{n})}[g_{i,j}^{\partial M}]&0\\0&1\end{bmatrix};\quad
[g^{i,j}_{M}]=\begin{bmatrix} h( x_{n})[g^{i,j}_{\partial M}]&0\\0&1\end{bmatrix},
\end{equation}
and
\begin{equation}
\partial_{x_{s}} g_{ij}^{\partial M}(x_{0})=0,\quad 1\leq i,j\leq n-1;\quad g_{i,j}^{M}(x_{0})=\delta_{ij}.
\end{equation}

Let $\{e_{1},\cdots, e_{n-1}\}$ be an orthonormal frame field in $U$ about $g^{\partial M}$ which is parallel along geodesics and
$e_{i}=\frac{\partial}{\partial x_{i}}(x_{0})$, then $\{\widetilde{e_{1}}=\sqrt{h(x_{n})}e_{1}, \cdots,
\widetilde{e_{n-1}}=\sqrt{h(x_{n})}e_{n-1},\widetilde{e_{n}}=\texttt{d}x_{n}\}$ is the orthonormal frame field in $\widetilde{U}$ about $g^{M}.$
Locally $\wedge^{\ast}(T^{\ast}M)|\widetilde{U}\cong \widetilde{U}\times\wedge^{*}_{C}(\frac{n}{2}).$ Let $\{f_{1},\cdots,f_{n}\}$ be the orthonormal basis of
$\wedge^{*}_{C}(\frac{n}{2})$. Take a spin frame field $\sigma: \widetilde{U}\rightarrow Spin(M)$ such that
$\pi\sigma=\{\widetilde{e_{1}},\cdots, \widetilde{e_{n}}\}$ where $\pi: Spin(M)\rightarrow O(M)$ is a double covering, then
$\{[\sigma, f_{i}], 1\leq i\leq 4\}$ is an orthonormal frame of $\wedge^{\ast}(T^{\ast}M)|_{\widetilde{U}}.$ In the following,
since the global form $ \widetilde{\Psi}$
is independent of the choice of the local frame, so we can compute $\texttt{tr}_{\wedge^{\ast}(T^{\ast}M)}$ in the frame
$\{[\sigma, f_{i}], 1\leq i\leq 4\}$.
Let $\{E_{1},\cdots,E_{n}\}$ be the canonical basis of $R^{n}$ and
$c(E_{i})\in cl_{C}(n)\cong \texttt{Hom}(\wedge^{*}_{C}(\frac{n}{2}),\wedge^{*}_{C}(\frac{n}{2}))$ be the Clifford action. By \cite{Wa3}, then

\begin{equation}
c(\widetilde{e_{i}})=[(\sigma,c(E_{i}))]; \quad c(\widetilde{e_{i}})[(\sigma, f_{i})]=[\sigma,(c(E_{i}))f_{i}]; \quad
\frac{\partial}{\partial x_{i}}=[(\sigma,\frac{\partial}{\partial x_{i}})],
\end{equation}
then we have $\frac{\partial}{\partial x_{i}}c(\widetilde{e_{i}})=0$ in the above frame. By Lemma 2.2 in \cite{Wa3}, we have

\begin{lem}\label{le:32}
With the metric $g^{M}$ on $M$ near the boundary
\begin{eqnarray}
\partial_{x_j}(|\xi|_{g^M}^2)(x_0)&=&\left\{
       \begin{array}{c}
        0,  ~~~~~~~~~~ ~~~~~~~~~~ ~~~~~~~~~~~~~{\rm if }~j<n; \\[2pt]
       h'(0)|\xi'|^{2}_{g^{\partial M}},~~~~~~~~~~~~~~~~~~~~~{\rm if }~j=n.
       \end{array}
    \right. \\
\partial_{x_j}[c(\xi)](x_0)&=&\left\{
       \begin{array}{c}
      0,  ~~~~~~~~~~ ~~~~~~~~~~ ~~~~~~~~~~~~~{\rm if }~j<n;\\[2pt]
\partial x_{n}(c(\xi'))(x_{0}), ~~~~~~~~~~~~~~~~~{\rm if }~j=n,
       \end{array}
    \right.
\end{eqnarray}
where $\xi=\xi'+\xi_{n}\texttt{d}x_{n}$
\end{lem}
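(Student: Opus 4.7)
The plan is to exploit the product-like form of the metric near the boundary together with the normal-coordinate property on $\partial M$. From (3.29) the dual metric satisfies $g^{ij}_M = h(x_n)\,g^{ij}_{\partial M}$ for $i,j<n$ and $g^{nn}_M = 1$, with no cross terms. Writing $\xi = \xi' + \xi_n\,\texttt{d}x_n$, this immediately gives
\begin{equation*}
|\xi|^2_{g^M} = h(x_n)\,|\xi'|^2_{g^{\partial M}} + \xi_n^2,
\end{equation*}
so the two cases of the first identity fall out by direct differentiation: for $j<n$, the normal-coordinate property (3.30) kills $\partial_{x_j}g^{ik}_{\partial M}(x_0)$ while $h$ depends only on $x_n$; for $j=n$, only the $h$-factor contributes, producing $h'(0)\,|\xi'|^2_{g^{\partial M}}$.

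For the second identity I work in the spin frame $\{[\sigma,f_i]\}$ introduced just before the lemma, where the matrix of $c(\widetilde{e}_k)$ is the constant matrix $c(E_k)$. Any derivative $\partial_{x_j}c(\alpha)$ in this frame therefore reduces to differentiating the coefficients expressing the $1$-form $\alpha$ in the orthonormal coframe $\{\widetilde{e}_k^*\}$. Decomposing $\xi = \xi' + \xi_n\,\texttt{d}x_n$, I note that $\texttt{d}x_n = \widetilde{e}_n^*$ identically, so its Clifford symbol is constant and $\partial_{x_j}\bigl(\xi_n\,c(\texttt{d}x_n)\bigr) = 0$ for every $j$. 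For $c(\xi')$ the transition from the coordinate coframe $\{\texttt{d}x_i\}_{i<n}$ to $\{\widetilde{e}_k^*\}_{k<n}$ is governed by $\sqrt{h(x_n)}$ times the $g^{\partial M}$ matrix. When $j<n$, both the $h$-factor (depending only on $x_n$) and the $g^{\partial M}$-entries (by (3.30)) have vanishing $x_j$-derivative at $x_0$, giving the first case; when $j=n$, only the $h$-dependence survives, which precisely yields $\partial_{x_n}(c(\xi'))(x_0)$.

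The main bookkeeping obstacle is keeping the three frames straight: the coordinate coframe $\{\texttt{d}x_i\}$ in which $\xi$ is expanded, the orthonormal coframe $\{\widetilde{e}_k^*\}$ adapted to $g^M$, and the spin frame $\{[\sigma,f_i]\}$ in which Clifford matrices are constant. Once one verifies that the transition coefficients between them involve only the scalar $\sqrt{h(x_n)}$ and the tangential metric components, both of which behave well under the normal-coordinate hypothesis (3.30), the whole statement is a short direct computation.
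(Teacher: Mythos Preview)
Your argument is correct and in fact supplies what the paper does not: the paper gives no proof of this lemma at all, merely citing ``By Lemma 2.2 in \cite{Wa3}'' and stating the result. Your direct computation from the product form (3.29) of the metric and the constancy of the Clifford matrices $c(\widetilde{e}_k)$ in the frame $\{[\sigma,f_i]\}$ (equation (3.31)) is exactly the intended route and is sound.

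One small point to tighten in the second identity: when you write that ``the $g^{\partial M}$-entries (by (3.30)) have vanishing $x_j$-derivative at $x_0$'', what you actually need is that the \emph{frame} coefficients $e_k^i$ (expressing the $g^{\partial M}$-orthonormal frame $e_k$ in the coordinate basis) satisfy $\partial_{x_j}e_k^i(x_0)=0$ for $j<n$. Equation (3.30) gives vanishing of the metric derivatives, which by itself only controls the symmetric part of $\partial_{x_j}e_k^i$; the full vanishing uses the additional hypothesis, stated just before (3.31), that the $e_k$ are parallel along geodesics through $x_0$ with $e_k(x_0)=\partial_k$. Once you invoke that, the argument is complete.
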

Then an application of Lemma 2.3 in \cite{Wa3} shows
\begin{lem}
The symbol of the  twisted signature operators  $\hat{D}^{*}_{F},   \hat{D}_{F}$
\begin{eqnarray}
\sigma_{0}(\hat{D}^{*}_{F})&=&-\frac{3}{4}h'(0)c(dx_n)
+\frac{1}{4}h'(0)\sum^{n-1}_{i=1}c(\widetilde{e_i})\hat{c}(\widetilde{e_n})\hat{c}(\widetilde{e_i})(x_0)\otimes id_{F}  \nonumber\\
  &&+\sum^n_{i=1}c(\widetilde{e_i})\sigma^{F,e}_{i}
   -\frac{1}{2}\sum_{i=1}^{n}\hat{c}(e_{i})\omega^{*}(F,g^{F})(e_{i});\\
 \sigma_{0}(\hat{D}_{F})&=&-\frac{3}{4}h'(0)c(dx_n)
+\frac{1}{4}h'(0)\sum^{n-1}_{i=1}c(\widetilde{e_i})\hat{c}(\widetilde{e_n})\hat{c}(\widetilde{e_i})(x_0)\otimes id_{F}  \nonumber\\
  &&+\sum^n_{i=1}c(\widetilde{e_i})\sigma^{F,e}_{i}
   -\frac{1}{2}\sum_{i=1}^{n}\hat{c}(e_{i})\omega(F,g^{F})(e_{i})
\end{eqnarray}
\end{lem}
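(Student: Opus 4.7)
The plan is to extract the zero-order symbol directly from the local expressions (3.22)--(3.23) and then evaluate the resulting spinor-connection sum at the boundary point $x_0$. In those expressions the only genuinely differential factor on the right is $\widetilde{e_i}$, whose symbol produces the principal part $\sqrt{-1}c(\xi)$ already recorded in (3.25); every other factor is a section of $\texttt{End}(\wedge^{\ast}(T^{\ast}M)\otimes F)$ and therefore contributes to $\sigma_0$. Reading them off gives
\begin{equation*}
\sigma_0(\hat{D}_F)=\tfrac{1}{4}\sum_{i,s,t}c(\widetilde{e_i})\omega_{s,t}(\widetilde{e_i})[\hat{c}(\widetilde{e_s})\hat{c}(\widetilde{e_t})-c(\widetilde{e_s})c(\widetilde{e_t})]\otimes\texttt{id}_F+\sum_i c(\widetilde{e_i})\sigma^{F,e}_i-\tfrac{1}{2}\sum_i\hat{c}(e_i)\omega(F,g^F)(e_i),
\end{equation*}
and the analogous identity for $\sigma_0(\hat{D}^{\ast}_F)$ with $\omega$ replaced by $\omega^{\ast}$.

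The substantive step is then to evaluate the spinor-connection piece at $x_0$. I would split the bracket $[\hat{c}\hat{c}-cc]$ into its two summands. The $cc$-piece is identical to the spin-connection contribution for the untwisted Dirac operator, so by Lemma 2.3 of \cite{Wa3} combined with (3.30)--(3.31), only connection $1$-forms with a $dx_n$-leg survive at $x_0$, and the standard Koszul computation for the warped metric $g^M=h^{-1}g^{\partial M}+dx_n^2$ reproduces the familiar term $-\tfrac{3}{4}h'(0)c(dx_n)$. The $\hat{c}\hat{c}$-piece runs through formally the same Christoffel bookkeeping but with $\hat{c}$ in both inner slots while the outer multiplier $c(\widetilde{e_i})$ is unchanged; again only indices where one of $s,t$ equals $n$ survive, and they package as $\tfrac{1}{4}h'(0)\sum_{i=1}^{n-1}c(\widetilde{e_i})\hat{c}(\widetilde{e_n})\hat{c}(\widetilde{e_i})(x_0)$.

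The twist term $\sum_i c(\widetilde{e_i})\sigma^{F,e}_i$ and the perturbation term $-\tfrac{1}{2}\sum_i\hat{c}(e_i)\omega(F,g^F)(e_i)$ (respectively $\omega^{\ast}(F,g^F)$) carry no $h$-dependent coefficients and therefore pass to $x_0$ verbatim. Adding the three contributions yields exactly (3.36) and (3.37).

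The expected obstacle is purely bookkeeping: keeping $c$ and $\hat{c}$ distinct on the outer versus inner slots, and correctly isolating which $(s,t)$ indices survive at $x_0$ in the $\hat{c}\hat{c}$ sum. No argument new to this paper is needed beyond transplanting Lemma 2.3 of \cite{Wa3} from the spinor bundle to $\wedge^{\ast}(T^{\ast}M)$.
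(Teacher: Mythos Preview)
Your proposal is correct and follows essentially the same route as the paper, which simply states ``an application of Lemma 2.3 in \cite{Wa3} shows'' and records the result. You have merely unpacked what that application entails---splitting the $[\hat{c}\hat{c}-cc]$ bracket and tracking which connection coefficients survive at $x_0$ under the warped metric---which is precisely the content of that cited lemma transplanted to $\wedge^{\ast}(T^{\ast}M)$.
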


Now we can compute $ \widetilde{\Psi}$ (see formula (3.11) for definition of $ \widetilde{\Psi}$), since the sum is taken over $-r-\ell+k+j+|\alpha|=3,
 \ r, \ell\leq-1$, then we have the following five cases:

\textbf{Case a(I)}: \ $r=-1, \ \ell=-1, \ k=j=0, \ |\alpha|=1$

From (3.11) we have
 \begin{eqnarray}
&&{\rm case~a)~I)}\nonumber\\
&=&-\int_{|\xi'|=1}\int^{+\infty}_{-\infty}\sum_{|\alpha|=1}
{\rm trace} \Big[\partial^\alpha_{\xi'}\pi^+_{\xi_n}\sigma_{-1}( \hat{D}_{F}^{-1})\times
\partial^\alpha_{x'}\partial_{\xi_n}\sigma_{-1}(( \hat{D}^{*}_{F})^{-1})\Big](x_0)\texttt{d}\xi_n\sigma(\xi')\texttt{d}x'\nonumber\\
&&-f\sum_{j<n}\partial_{j}(f^{-1})\int_{|\xi'|=1}\int^{+\infty}_{-\infty}\sum_{|\alpha|=1}
{\rm trace} \Big[\partial^\alpha_{\xi'}\pi^+_{\xi_n}\sigma_{-1}( \hat{D}_{F}^{-1})\times
\partial_{\xi_n}\sigma_{-1}(( \hat{D}^{*}_{F})^{-1})\Big](x_0)\texttt{d}\xi_n\sigma(\xi')\texttt{d}x'\nonumber\\
&=&0.
\end{eqnarray}
And similarly we get

{\bf case a)~II)}~$r=-1,~l=-1~k=|\alpha|=0,~j=1$
 \begin{eqnarray}
&&{\rm case \ a)~II)}\nonumber\\
&=&-\frac{1}{2}\int_{|\xi'|=1}\int^{+\infty}_{-\infty} {\rm
trace} \Big[\partial_{x_n}\pi^+_{\xi_n}\sigma_{-1}(  \hat{D} _{F} ^{-1})\times
\partial_{\xi_n}^2\sigma_{-1}(( \hat{D}^{*}_{F})^{-1})\Big](x_0)\texttt{d}\xi_n\sigma(\xi')\texttt{d}x'\nonumber\\
&&-f^{-1}\partial_{x_n}(f)\frac{1}{2}\int_{|\xi'|=1}\int^{+\infty}_{-\infty} {\rm
trace} \Big[\partial_{x_n} \sigma_{-1}(  \hat{D} _{F} ^{-1})\times
\partial_{\xi_n}^2\sigma_{-1}(( \hat{D}^{*}_{F})^{-1})\Big](x_0)\texttt{d}\xi_n\sigma(\xi')\texttt{d}x'\nonumber\\
&=&-\frac{3l}{2}\pi h'(0)\Omega_3dx'-\frac{\pi i}{2}\Omega_{3}f^{-1}\partial_{x_n}(f)\texttt{d}x'.
\end{eqnarray}

 {\bf case a)~III)}~$r=-1,~l=-1~j=|\alpha|=0,~k=1$
 \begin{eqnarray}
 &&{\rm case~ a)~III)}\nonumber\\
&=&-\frac{1}{2}\int_{|\xi'|=1}\int^{+\infty}_{-\infty}
{\rm trace} \Big[\partial_{\xi_n}\pi^+_{\xi_n}\sigma_{-1}( \hat{D}_{F}^{-1})\times
\partial_{\xi_n}\partial_{x_n}\sigma_{-1}(( \hat{D}^{*}_{F})^{-1})\Big](x_0)\texttt{d}\xi_n\sigma(\xi')\texttt{d}x'\nonumber\\
&&-f\partial_{x_{n}}(f^{-1})\Big[\partial_{\xi_n}\pi^+_{\xi_n}\sigma_{-1}( \hat{D}_{F}^{-1})\times
\partial_{\xi_n}\partial_{x_n}\sigma_{-1}(( \hat{D}^{*}_{F})^{-1})\Big](x_0)\texttt{d}\xi_n\sigma(\xi')\texttt{d}x'\nonumber\\
&=&\frac{3l}{2}\pi h'(0)\Omega_{3}\texttt{d}x'+\frac{\pi i}{2}\Omega_{3}f\partial_{x_{n}}(f^{-1})\texttt{d}x'.
\end{eqnarray}

 {\bf case b)}~$r=-2,~l=-1,~k=j=|\alpha|=0$
\begin{eqnarray}
{\rm case~ b)}&=&-i\int_{|\xi'|=1}\int^{+\infty}_{-\infty}
{\rm trace} \Big[\pi^+_{\xi_n}\sigma_{-2}(f \hat{D}_{F}^{-1})\times
\partial_{\xi_n}\sigma_{-1}((f \hat{D}^{*}_{F})^{-1})\Big](x_0)\texttt{d}\xi_n\sigma(\xi')\texttt{d}x' \nonumber\\
&=&\Big[\frac{9}{2}lh'(0)-4 \texttt{tr}_{F}[\sigma^{F,e}_{n}]\Big]\pi \Omega_{3}\texttt{d}x'..
\end{eqnarray}

{\bf  case c)}~$r=-1,~l=-2,~k=j=|\alpha|=0$
\begin{eqnarray}
{\rm case~ c)}&=&-i\int_{|\xi'|=1}\int^{+\infty}_{-\infty}{\rm trace}\Big [\pi^+_{\xi_n}\sigma_{-1}(f \hat{D}_{F}^{-1})\times
\partial_{\xi_n}\sigma_{-2}(f^{-1}( \hat{D}^{*}_{F})^{-1})\Big](x_0)d\xi_n\sigma(\xi')dx'\nonumber\\
&=&\Big[-\frac{9}{2}lh'(0)+4\texttt{tr}_{F}[\sigma^{F,e}_{n}]\Big]\pi \Omega_{3}\texttt{d}x'.
\end{eqnarray}
We note that $\texttt{dim} S(TM)=4$, now $\widetilde{\Psi}$  is the sum of the \textbf{case (a, b, c)}, so
\begin{equation}
\sum \textbf{case a, b , c}=\frac{\pi i}{2}\Omega_{3} \big[f\partial_{x_{n}}(f^{-1})- f^{-1}\partial_{x_{n}}(f)\big]\texttt{d}x'
\end{equation}
Hence we conclude that
\begin{thm}
 Let M be a 4-dimensional compact manifolds   with the boundary $\partial M$, for Perturbations of  twisted signature Operators $\hat{D}_{F}$, then
 \begin{eqnarray}
&&\widetilde{Wres}[\pi^{+}( \hat{D}_{F}^{*})^{-1} \circ\pi^{+} \hat{D}_{F}^{-1}]\nonumber\\
&=&4\pi^{2}\int_{M}\big\{{\bf{Tr}}
 \Big[-\frac{s}{12} +\frac{1}{4}[\hat{c}(\omega^{*})-\hat{c}(\omega)]^{2}
         -\frac{1}{4}\hat{c}(\omega^{*})\hat{c}(\omega)-\frac{1}{4}\sum_{j}\nabla_{e_{j}}^{F}\big(\hat{c}(\omega^{*})\big)c(e_{j})\nonumber\\
       &&  +\frac{1}{4}\sum_{j}c(e_{j})\nabla_{e_{j}}^{F}\big(\hat{c}(\omega)\big) \Big]\nonumber\\
&& +4f^{-1}\Delta(f)+8\langle grad_{M}(f) , grad_{M}(f^{-1}) \rangle-5f^{-2}\big[ |grad_{M}(f)|^{2}+2\Delta(f)\big]\big\}\texttt{d}vol_{M}\nonumber\\
&&+\int_{\partial_{ M}}\frac{\pi i}{2}\Omega_{3} \big[f\partial_{x_{n}}(f^{-1})- f^{-1}\partial_{x_{n}}(f)\big]\texttt{d}x'
\end{eqnarray}
where  $s$ is the scalar curvature.
\end{thm}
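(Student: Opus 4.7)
The plan is to combine the interior-plus-boundary decomposition (3.10) with the two pieces already computed: the global scalar curvature term from Theorem 3.3 (equivalently the local form (3.18)) and the symbol calculation of $\widetilde{\Psi}$ on $\partial M$. More precisely, I will start from
\begin{equation*}
\widetilde{Wres}\bigl[\pi^{+}(f\hat{D}_{F}^{-1})\circ\pi^{+}(f^{-1}(\hat{D}_{F}^{*})^{-1})\bigr]
=Wres\bigl[(\hat{D}_{F}^{*}\hat{D}_{F}-\hat{D}_{F}^{*}c(df)f^{-1})^{-1}\bigr]+\int_{\partial M}\widetilde{\Psi},
\end{equation*}
then plug in (3.18) for the first summand on the right-hand side. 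This reduces the theorem to computing $\int_{\partial M}\widetilde{\Psi}$, which is exactly the boundary contribution displayed in (3.36).

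To evaluate $\int_{\partial M}\widetilde{\Psi}$, I would follow the pattern of Section 2: since the sum in the definition of $\widetilde{\Psi}$ is constrained by $r-k+|\alpha|+\ell-j-1=-n=-4$ with $r,\ell\leq-1$, only the five cases a)\,I, a)\,II, a)\,III, b) and c) contribute. For each case I would substitute the explicit symbols $\sigma_{-1}(\hat{D}_{F}^{-1})$, $\sigma_{-1}((\hat{D}_{F}^{*})^{-1})$ from Lemma 3.6 together with $\sigma_{0}(\hat{D}_{F})$ and $\sigma_{0}(\hat{D}_{F}^{*})$ from Lemma 3.7 into (3.11), and then use Lemma 2.4 (also extended via (3.32)) to simplify $\partial_{x_j}|\xi|^2$ and $\partial_{x_j}c(\xi)$ at $x_0\in\partial M$. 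After applying the projector $\pi^{+}_{\xi_n}$ via the Cauchy integral formula in the upper half-plane and taking traces in the frame $\{[\sigma,f_i]\}$, the five contributions (3.33), (3.34), (3.35), and the $b)$, $c)$ cases should cancel the $h'(0)$ and $\sigma^{F,e}_n$ terms pairwise, leaving only the multiplicative term coming from $f$ and $f^{-1}$.

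Putting everything together the surviving boundary contribution is
\begin{equation*}
\sum\textbf{case a, b, c}=\frac{\pi i}{2}\Omega_{3}\bigl[f\partial_{x_n}(f^{-1})-f^{-1}\partial_{x_n}(f)\bigr]\texttt{d}x',
\end{equation*}
which, combined with the interior integral (3.18), yields exactly (3.36). The expected main obstacle is the pairwise cancellation in cases b) and c): these are the only places where $\sigma^{F,e}_n$ and the $h'(0)$-multiples of $\hat{c}(\widetilde{e}_n)\hat{c}(\widetilde{e}_i)$ appear, and one must show by careful Clifford algebra that all the $\Phi$-independent twisting terms cancel, leaving only the clean conformal piece involving $f\partial_{x_n}(f^{-1})-f^{-1}\partial_{x_n}(f)$. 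Compared with the Dirac case in Theorem 2.6, the signature case has an additional $\hat{c}$-term in $\sigma_0$, so I would check that $\mathrm{tr}\,c(\xi')\hat{c}(\widetilde{e}_n)\hat{c}(\widetilde{e}_i)$-type contributions vanish once the $\xi_n$-integration is performed, which reduces the counting to the same scalar integrals $\int\xi_n/(|\xi'|^2+\xi_n^2)^k\,d\xi_n$ handled in Section 2.
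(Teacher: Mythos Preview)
Your proposal is correct and follows essentially the same route as the paper: the decomposition (3.10) into interior plus boundary, the interior term handled by Theorem~3.3/(3.18), and the boundary term $\widetilde{\Psi}$ split into the same five cases with the same pairwise cancellations of the $h'(0)$ and $\sigma^{F,e}_n$ contributions, leaving only the conformal piece (3.41). Your equation and lemma numbers are slightly shifted (the symbol lemma is Lemma~3.4, the $\sigma_0$ lemma is Lemma~3.6, and the five case computations are (3.36)--(3.40) with sum (3.41)), but the strategy matches the paper exactly.
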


\section*{ Acknowledgements}
This work was supported by the National Natural Science Foundation of China No. 11501414 and No. 11771070.
 The authors also thank the referee for his (or her) careful reading and helpful comments.

\section*{References}

\end{document}